\theoremstyle{plain}
\newtheorem{thm}{Thm}
\newtheorem{claim}{Claim}
\newtheorem{theorem}[thm]{Theorem}
\newtheorem{lemma}[thm]{Lemma}
\newtheorem{corollary}[thm]{Corollary}
\newtheorem{proposition}[thm]{Proposition}
\newtheorem{conjecture}[thm]{Conjecture}
\newtheorem{observation}[thm]{Observation}
\newtheorem{definition}[thm]{Definition}
\providecommand{\customgenericname}{}
\newcommand{\newcustomtheorem}[2]{%
	\newenvironment{#1}[1]
	{%
		\renewcommand\customgenericname{#2}%
		\renewcommand\theinnercustomgeneric{##1}%
		\innercustomgeneric
	}
	{\endinnercustomgeneric}
}
\newenvironment{proof*}{\noindent\emph{Proof of the claim:}}{\hfill$\Diamond$}
\renewcommand{\pod}[1]{\allowbreak\mathchoice
	{\if@display \mkern 0mu\else \mkern 0mu\fi (#1)}
	{\if@display \mkern 0mu\else \mkern 0mu\fi (#1)}
	{\mkern 1mu(\mathrm{mod}\mkern 4mu #1)}
	{\mkern 0mu(#1)}
}
\tikzstyle{vertex}=[circle, draw, fill=black!50,
\tikzset{->-/.style={decoration={
			markings,
			mark=at position .5 with {\arrow{>}}},postaction={decorate}}}
\tikzstyle{bigblue}=[color=blue, very thick, >=stealth]
\tikzstyle{lightblue}=[color=blue, thin, >=stealth]
\tikzstyle{bigred}=[color=red, very thick, >=stealth]
\tikzstyle{lightred}=[color=red, thin, >=stealth]
\tikzstyle{biggreen}=[color=black!30!green, very thick, >=stealth]
\tikzstyle{lightgreen}=[color=black!30!green,  thin, >=stealth]
\begin{document}
	
		
	\begin{frontmatter}

	\title{Fractional balanced chromatic number of signed subcubic graphs}
		
\author[1]{Xiaolan Hu}
\author[2]{Luis Kuﬀner}
\author[3]{Jiaao Li}
\author[4]{Reza Naserasr}
\author[5]{Lujia Wang}
\author[3]{Zhouningxin Wang}
\author[6]{Xiaowei Yu}

\address[1]{School of Mathematics and Statistics \& Hubei Key Laboratory of Mathematical Sciences, Central China Normal University, Wuhan 430079, China.}
\address[2]{ \'{E}cole normale sup\'{e}rieure, PSL University, Paris, France}
\address[3]{ School of Mathematical Sciences and LPMC, Nankai University, Tianjin 300071, China}
\address[4]{ Universit\'{e} Paris Cit\'{e}, CNRS, IRIF, F-75013, Paris, France}
\address[5]{ Zhejiang Normal University, Jinhua, China}
\address[6]{ Jiangsu Normal University, Xuzhou, China}
\address[7]{ Emails: xlhu@mail.ccnu.edu.cn; \{lijiaao,  wangzhou\}@nankai.edu.cn; luis.kuffner.pineiro@ens.psl.eu;
	reza@irif.fr; ljwang@zjnu.edu.cn; xwyu@jsnu.edu.cn}

		\begin{abstract}
				A signed graph is a pair $(G,\sigma)$, where $G$ is a graph and $\sigma: E(G)\rightarrow \{-, +\}$, called signature, is an assignment of signs to the edges. Given a signed graph $(G,\sigma)$ with no negative loops, a balanced $(p,q)$-coloring of $(G,\sigma)$ is an assignment $f$ of $q$ colors to each vertex from a pool of $p$ colors such that each color class induces a balanced subgraph, i.e., no negative cycles.
			Let $(K_4,-)$ be the signed graph on $K_4$ with all edges being negative. In this work, we show that every signed (simple) subcubic graph admits a balanced $(5,3)$-coloring except for $(K_4,-)$ and signed graphs switching equivalent to it. For this particular signed graph the best balanced colorings are $(2p,p)$-colorings.			 
		\end{abstract}
		
		\begin{keyword}
		signed subcubic graphs; $(p,q)$-coloring; fractional balanced chromatic number
		\end{keyword}
		
	\end{frontmatter}

	\section{Introduction}
	Let $a,b$ be positive integers with $a\geq b$. First introduced in \cite{HRS73}, a \emph{fractional $\frac{a}{b}$-coloring} of a graph $G$ is an assignment $f: V(G) \to \binom{[a]}{b}$ where $[a]:=\{1,2,\ldots,a\}$ is a set of colors, such that $f(u)\cap f(v)=\emptyset$ for every edge $uv$ of $G$. The \emph{fractional chormatic number} of $G$, denoted $\chi_f(G)$, is defined to be $\chi_f(G)=\min \{\frac{a}{b}\mid \text{$G$ admits a fractional $\frac{a}{b}$-coloring}\}$. It is easy to see that $\chi_f(G)\leq \chi(G)$ as any proper $k$-coloring can be viewed as a fractional $\frac{k}{1}$-coloring. Let $\Delta(G)$ denote the maximum degree of $G$. By Brook's theorem, for $\Delta(G)\geq 3$, if $G$ contains no complete graph $K_{\Delta}$, then $\chi_f(G)\leq \Delta$.

	The fractional chromatic number of subcubic graphs (i.e., $\Delta\leq 3$) receives a great deal of attention. In particular, based on the study of independent sets in triangle-free subcubic graphs \cite{F78,GM96,HT01,S79}, Heckman and Thomas~\cite{HT01} conjectured that subcubic triangle-free graphs have fractional chromatic number at most $\frac{14}{5}$ and this bound is tight. Progress towards this conjecture can be found in~\cite{FKK14,HZ10,KKV11,L14,LP12}. It is resolved by Dvo\v r\'ak, Sereni, and Volec in \cite{DSV14}.  
	
	\begin{theorem}{\em \cite{DSV14}}
		Every subcubic triangle-free graph $G$ satisfies $\chi_f(G)\leq \frac{14}{5}$.
	\end{theorem}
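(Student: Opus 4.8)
The plan is to pass to the dual, linear-programming characterization of the fractional chromatic number. By LP duality one has $\chi_f(G) = \max_w \frac{w(V(G))}{\alpha_w(G)}$, where the maximum ranges over nonnegative vertex weightings $w$ and $\alpha_w(G)$ denotes the maximum weight of an independent set; equivalently, $\chi_f(G)\le \frac{14}{5}$ holds precisely when there is a probability distribution on the independent sets of $G$ under which every vertex is selected with probability at least $\frac{5}{14}$. Thus the statement reduces to the \emph{weighted independence ratio} bound: for every nonnegative integer weighting $w$ of $V(G)$, the graph $G$ contains an independent set $I$ with $w(I)\ge \frac{5}{14}\,w(V(G))$. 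In the unit-weight case this is exactly Staton's classical $5/14$ bound \cite{S79} (reproved by Heckman and Thomas \cite{HT01}), so the genuine content lies in the extension to arbitrary weights. I would emphasize at the outset that the naive reduction---replacing each vertex $v$ by $w(v)$ false twins and invoking the unweighted ratio---fails, because the resulting blow-up, while still triangle-free, is no longer subcubic; this is precisely why the weighted bound is strictly stronger than Staton's theorem.

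To prove the weighted statement I would argue by contradiction, taking a counterexample $(G,w)$ that minimizes a suitable complexity measure such as $w(V(G))+|E(G)|$, breaking ties by $|V(G)|$. The first phase is to accumulate \emph{reducible configurations}: local structures that cannot occur in a minimal counterexample. Natural candidates are vertices of degree at most one, vertices carrying small weight, two adjacent degree-two vertices, short chains of degree-two vertices, and a list of specific bounded-size subgraphs. For each such configuration one deletes or locally modifies it, applies minimality to the smaller instance, and then lifts the resulting independent set back to $G$ while controlling the weight lost on the modified part, contradicting the choice of $(G,w)$. After this phase one may assume that $G$ is essentially cubic and contains none of the listed configurations.

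The second phase is a discharging argument: assign to each vertex an initial charge encoding the weighted deficit, so that the total charge is negative exactly because $(G,w)$ is a counterexample (i.e.\ $14\,\alpha_w(G)<5\,w(V(G))$), then redistribute charge by a fixed set of local rules and show that, in the absence of reducible configurations, every vertex ends with nonnegative charge, forcing the total to be nonnegative---a contradiction. The main obstacle, and the reason this problem resisted for so long, is that the bound $\frac{14}{5}$ is \emph{tight}, attained by a $14$-vertex cubic triangle-free graph of independence number $5$; consequently the discharging rules must be calibrated with almost no slack, and the family of configurations one must prove simultaneously unavoidable and reducible becomes very large. In practice this forces an extensive, computer-assisted case analysis to certify both unavoidability of the configuration list and reducibility of each member, which is exactly the heavy machinery that Dvo\v{r}\'ak, Sereni, and Volec carry out in \cite{DSV14}.
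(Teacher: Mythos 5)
The theorem you were asked to prove is Theorem 1 of this paper, which is quoted from \cite{DSV14} without proof; there is therefore no internal proof to compare against, and your argument must stand on its own. It does not. Your opening reduction is correct and standard: by LP duality, $\chi_f(G)\le\frac{14}{5}$ is equivalent to the assertion that for every nonnegative (equivalently, integer) weighting $w$ of $V(G)$ there is an independent set $I$ with $w(I)\ge\frac{5}{14}\,w(V(G))$, and you are right to stress that this is strictly stronger than Staton's unweighted $\frac{5}{14}$ bound \cite{S79,HT01}, since blowing up vertices into false twins destroys subcubicity. But beyond that point the proposal contains no mathematics. The ``reducible configurations'' are listed only as natural candidates, and not one of them is actually shown to be reducible under your complexity measure; the discharging rules are never written down; unavoidability of the configuration list is never argued; and the tightness example (the $14$-vertex cubic triangle-free graph of independence number $5$) is invoked only to explain why the analysis must be delicate. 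A minimal-counterexample-plus-discharging skeleton with every local argument omitted is a research plan, not a proof.

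Moreover, the outline is circular where it matters most: it closes by asserting that the required case analysis ``is exactly the heavy machinery that Dvo\v{r}\'ak, Sereni, and Volec carry out in \cite{DSV14}'' --- that is, it appeals to the very result being proved. Two secondary cautions: first, you should check your characterization of \cite{DSV14} as computer-assisted; the computer-assisted strengthening relevant to this paper is the later $\frac{11}{4}$ bound of \cite{DLP22}, whereas the $\frac{14}{5}$ proof is a long but human-verifiable analysis (and it works with fractional colorings with demands rather than a literal discharging argument). Second, even granting your framework, the step ``lifts the resulting independent set back to $G$ while controlling the weight lost'' is exactly where all the difficulty lives when the target ratio is tight; asserting it for an unspecified family of configurations carries no force. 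To make this a proof you would have to supply the configuration list, the reducibility arguments, and the global counting --- in effect, reproduce the content of \cite{DSV14}.
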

	
	Recently, Dvo\v r\'ak, Lidick\'y and Postle \cite{DLP22} showed that every subcubic triangle-free graph avoiding two exceptional graphs as subgraphs admits a fractional $\frac{11}{4}$-coloring. This implies that every subcubic triangle-free planar graph has fractional chromatic number at most $\frac{11}{4}$. However, another conjecture of Heckman and Thomas~\cite{HT06} asserts that every subcubic triangle-free planar graph admits a fractional $\frac{8}{3}$-coloring which remains open. 
	
	\medskip
	Following this line of study, we explore the fractional balanced coloring of signed subcubic graphs in this paper. 
	
	A \emph{signed graph} $(G,\sigma)$ is a graph $G=(V,E)$ endowed with a \emph{signature function} $\sigma: E(G)\rightarrow \{-, +\}$ which assigns to each edge $e$ a sign $\sigma(e)$. An edge $e$ is called a \emph {positive edge} (or \emph{negative edge}, respectively) if $\sigma(e) = +$ (or $\sigma(e) = -$, respectively). The graph $G$ is called the \emph{underlying graph} of $(G, \sigma)$. When the signature is clear from context, $\widehat{G}$ is also used to denote a signed graph with the underlying graph $G$.
	
	Assume $(G, \sigma)$ is a signed graph and $v$ is a vertex of $G$. The \emph{vertex switching} at $v$ results in a signature $\sigma'$ defined as 
	
	$$\sigma'(e) = \begin{cases}
		- \sigma(e), &\text{if $v$ is a vertex of $e$ and $e$ is not a loop}; \cr 
		\sigma(e), &\text{ otherwise}.
	\end{cases}$$
	
	Two signatures $\sigma_1$ and $\sigma_2$ on the same underlying graph $G$ are said to be \emph{switching equivalent}, denoted by $\sigma_1\equiv\sigma_2$, if one is obtained from the other by a sequence of vertex switchings.

	Given a graph $G$, we denote by $(G,+)$ ($(G,-)$, respectively) the signed graph whose signature function is constantly positive (negative, respectively) on $G$. A signed graph $(G, \sigma)$ is {\em balanced} if $(G, \sigma) \equiv (G,+)$. A subset $X$ of vertices of a signed graph $(G, \sigma)$ is called {\em balanced} if $(G[X], \sigma|_{G[X]})$ is balanced. The size of a largest balanced set in $(G,\sigma)$ is denoted by $\beta(G,\sigma)$.

	\begin{definition}\label{def:fractionalBalancedColoring}
		Let $\widehat{G}$ be a signed graph. 
		Given a positive integer $p$ and a mapping $\phi:V(G)\to [p]$, a \emph{balanced $(p, \phi)$-coloring} of $\widehat{G}$, or simply a \emph{$(p, \phi)$-coloring} of $\widehat{G}$, is an assignment $f$, which assigns to each vertex $v$ a set of $\phi(v)$ colors from the set $[p]$ of $p$ colors in such a way that for each color $i$ the set of vertices assigned color $i$ is balanced. 
		
		If $\phi$ is the constant mapping $\phi(v)=q$ for every $v\in V(\widehat{G})$, then we write \emph{$(p,q)$-coloring} in place of $(p, \phi)$-coloring.
	\end{definition}
	
	It is observed that a signed graph $\widehat{G}$ admits a $(p,q)$-coloring for some $p\geq q$ if and only if it contains no negative loops. Hence, we assume that all signed graphs mentioned in this paper satisfy this property. On the other hand, the presence of a positive loop at a vertex does not affect the $(p,q)$-colorability of a signed graph. So we will always assume the signed graphs considered here have a positive loop attached to each of its vertices. On the other hand, in this work we do not allow parallel edges
	
	\begin{definition}
		Given a signed graph $\widehat{G}$, the \emph{fractional balanced chromatic number}, denoted $\chi_{\text{\it fb}}(\widehat{G})$, is defined as $$\chi_{\text{\it fb}}(\widehat{G})=\inf \left\{\frac{p}{q} \mid  \widehat{G} \text{ admits a } (p,q)\text{-coloring}\right\}.$$
	\end{definition}
	
	It is easily observed that the fractional balanced chromatic number is invariant under vertex switching.
	
	For the case $q=1$ in \Cref{def:fractionalBalancedColoring}, the fractional balanced coloring is reduced to a \emph{balanced $p$-coloring} of $\widehat{G}$, a notion first studied by Zaslavsky \cite{Z87} under the terminology ``balanced partition". The balanced coloring has drawn more attention recently when Jimenez, McDonald, Naserasr, Nurse, and Quiroz \cite{JMNNQ24+} showed an equivalent formulation of the famous Hadwiger conjecture with the setting of signed graphs and balanced chromatic number. For general $p$ and $q$ Kuffner, Naserasr, Wang, Yu, Zhou, and Zhu \cite{KNWYZZ24+} defined the signed analogy of the Kneser graphs, which serve as the homomorphism targets for fractional balanced coloring, and studied their balanced chromatic number. The same group showed that Hedetniemi's conjecture holds for the fractional balanced chromatic number and the categorical product of signed graphs \cite{KNWYZZ25+}. The problem studied in the current work also follows this direction of research.
	
	Observe that, being balanced, the color class $i$ can be switched to induce only positive edges. Hence, in practice, we use the following refined definition: A \emph{$(p, \phi)$-coloring} of $(G, \sigma)$ is a mapping $f$ of vertices where each vertex $v$ is assigned a set of $\phi(v)$ colors from the set $\pm[p]:=\{ \pm 1, \pm 2,\ldots \pm p\}$ such that first of all $-f(v)\cap f(v) =\emptyset$, and secondly if $\sigma(uv)=+1$, then $-f(u)\cap f(v) =\emptyset$ and if $\sigma(uv)=-1$, then $f(u)\cap f(v) =\emptyset$. For future reference, we write
	
	$$\binom{[p]}{\pm q}:= \{A \mid A \text{ is a } q \text{-subset of } \pm[p] \text{ such that } -A\cap A=\emptyset \}.$$ 
	
	The first observation for $(p, \phi)$-colorings is that one can permute colors and switch the role of $i$ with $-i$. Thus we have the following.
	
	\begin{observation}\label{obs:FreeVertex}
		If a signed graph $(G,\sigma)$ admits a $(p, \phi)$-coloring, then a for a fixed vertex $v$, any set $A$ of $\phi(v)$ colors satisfying $-A\cap A=\emptyset$ can be selected as the color set of $v$. 
	\end{observation}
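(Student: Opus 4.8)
The plan is to exhibit explicitly the two symmetry operations alluded to just before the statement and to show that each of them sends a valid $(p,\phi)$-coloring to another valid $(p,\phi)$-coloring. Together these operations generate a group acting on the color set $\pm[p]$, and I will argue that this group acts transitively on the admissible color sets, i.e.\ on $\binom{[p]}{\pm q}$ for each relevant $q$. Once transitivity is in hand the conclusion is immediate: starting from a given $(p,\phi)$-coloring $f$ with $f(v)=A_0$, and given any target admissible set $A$ with $|A|=\phi(v)$, one selects a group element $g$ with $g(A_0)=A$ and applies $g$ to every color set $f(w)$; the result is a valid coloring in which $v$ receives exactly $A$.

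The two generators are: (i) for a permutation $\pi$ of $[p]$, the map extended to $\pm[p]$ by $\pi(-i):=-\pi(i)$; and (ii) for a fixed color $j\in[p]$, the transposition $\tau_j$ that swaps $j$ with $-j$ and fixes all other elements of $\pm[p]$. First I would record the one fact that drives everything: each generator commutes with negation, that is, $g(-x)=-g(x)$ for all $x\in\pm[p]$. For $\pi$ this holds by definition, and for $\tau_j$ one checks it directly on $x\in\{j,-j\}$ and on $x\notin\{\pm j\}$.

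Next I would verify invariance of the coloring under any bijection $g$ commuting with negation, by transporting each of the three defining conditions through $g$. Since $g$ is a bijection, $\lvert g(f(w))\rvert=\phi(w)$. Because $g$ commutes with negation and preserves intersections (being injective), $-g(f(w))\cap g(f(w))=g\bigl(-f(w)\cap f(w)\bigr)=g(\emptyset)=\emptyset$, so each new color set is again admissible; and for an edge $uw$ one has $-g(f(u))\cap g(f(w))=g\bigl(-f(u)\cap f(w)\bigr)$ and $g(f(u))\cap g(f(w))=g\bigl(f(u)\cap f(w)\bigr)$, so the positive-edge and negative-edge conditions are preserved exactly. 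Hence $g\circ f$ is again a $(p,\phi)$-coloring.

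The remaining step is transitivity on admissible sets. I would write $A_0=\{\varepsilon_1 i_1,\dots,\varepsilon_k i_k\}$ and $A=\{\delta_1 l_1,\dots,\delta_k l_k\}$ with distinct indices $i_1,\dots,i_k\in[p]$ and $l_1,\dots,l_k\in[p]$ and signs $\varepsilon_t,\delta_t\in\{+,-\}$ (this is exactly the shape that $-A_0\cap A_0=\emptyset$ and $-A\cap A=\emptyset$ permit). Applying the transpositions $\tau_{i_t}$ for those $t$ with $\varepsilon_t=-$ turns $A_0$ into the positive set $\{i_1,\dots,i_k\}$; a permutation $\pi$ of $[p]$ sending $i_t\mapsto l_t$ turns this into $\{l_1,\dots,l_k\}$; and finally the transpositions $\tau_{l_t}$ for those $t$ with $\delta_t=-$ install the correct signs, yielding $A$. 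The composite of these generators is the sought element $g$. The one point to watch throughout — and the only place where care is genuinely needed — is that $g$ must be applied simultaneously to the color sets of all vertices, not merely to $v$: it is precisely the fact that $g$ commutes with negation that guarantees this global relabeling leaves every positive- and negative-edge constraint intact, so that no new conflict is created elsewhere while the color set at $v$ is fixed to $A$.
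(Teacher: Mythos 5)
Your proof is correct and is precisely the argument the paper intends: the observation is justified there by the one-line remark that one may permute the colors of $[p]$ and swap the roles of $i$ and $-i$, which are exactly your two generators. Your write-up simply makes explicit the two points the paper leaves implicit, namely that negation-commuting bijections of $\pm[p]$ preserve all three coloring conditions and that they act transitively on $\binom{[p]}{\pm q}$.
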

	
	As a follow up to this this observation we have:
	
	\begin{lemma}
		If each $2$-connected block of a signed graph $\widehat{G}$ admits a $(p,\phi)$-coloring, then $\widehat{G}$ itself admits $(p, \phi)$-coloring,
	\end{lemma}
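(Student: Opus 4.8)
The plan is to prove this by induction on the number of $2$-connected blocks, using the block-cut tree structure of $\widehat{G}$. Recall that a connected graph decomposes into $2$-connected blocks that are glued together along cut vertices, and the block-cut tree organizes this decomposition. Since the $(p,\phi)$-colorability of a graph is determined component by component (a color class is balanced iff its restriction to each connected component is balanced), it suffices to handle a single connected component, so I would assume $\widehat{G}$ is connected.

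First I would set up the induction. If $\widehat{G}$ has only one block, the statement is immediate. Otherwise, pick a leaf block $B$ of the block-cut tree, meeting the rest of the graph at a single cut vertex $v$. Let $\widehat{G}'$ be the signed subgraph obtained by deleting $B$ except for the vertex $v$; then $\widehat{G}'$ has fewer blocks and, by the induction hypothesis together with the assumption on blocks, admits a $(p,\phi)$-coloring $f'$. Separately, $B$ itself admits a $(p,\phi)$-coloring $f_B$ by hypothesis.

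The key step is to merge $f'$ and $f_B$ into a single $(p,\phi)$-coloring of $\widehat{G}$. The only vertex shared by $\widehat{G}'$ and $B$ is the cut vertex $v$, and the colorings $f'$ and $f_B$ may assign different color sets to $v$. Here is where \Cref{obs:FreeVertex} does the work: since $f'(v)$ and $f_B(v)$ are both elements of $\binom{[p]}{\pm\phi(v)}$, the observation guarantees that we may modify $f_B$ by permuting colors and swapping $i$ with $-i$ so that the resulting coloring of $B$ assigns to $v$ exactly the set $f'(v)$, without destroying the property that each color class of $B$ is balanced. After this adjustment, $f'$ and $f_B$ agree on $v$, which is their only common vertex.

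Finally I would verify that the combined assignment $f$, defined as $f'$ on $V(\widehat{G}')$ and as the adjusted $f_B$ on $V(B)$, is a genuine $(p,\phi)$-coloring of $\widehat{G}$. The point is that every edge of $\widehat{G}$ lies entirely within $\widehat{G}'$ or entirely within $B$, since blocks overlap only in the cut vertex and no edge joins the two sides. More substantively, I must check that each color class is balanced in all of $\widehat{G}$: a negative cycle would have to live inside a single block (cycles are $2$-connected and therefore confined to one block), so balance in each block forces global balance of each color class. This is the main point to get right, and it follows cleanly from the fact that every cycle of $\widehat{G}$ is contained in a single block. This completes the induction and the proof.
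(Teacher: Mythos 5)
Your proof is correct and takes essentially the same approach as the paper's: both merge colorings of blocks at cut vertices, using Observation~\ref{obs:FreeVertex} to make the two colorings agree on the shared vertex, and both rest on the fact that gluing at a single vertex creates no new cycles, so every cycle (in particular every negative cycle in a color class) stays inside one block. The paper states this for a single identification and leaves the iteration implicit, whereas you spell out the induction over the block-cut tree; the content is the same.
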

	
	\begin{proof}
		Observe that given two (signed) graphs $\widehat{G_1}$ and $\widehat{G_2}$  on distinct sets of vertices, if we identify one vertex from each, we create no new cycle, and, hence, the resulting signed graph is balanced if and only if both $\widehat{G_1}$ and $\widehat{G_2}$ are balanced. Thus merging the colorings of both $\widehat{G_1}$ and $\widehat{G_2}$ at the identified vertex, which can be done thanks to Observation~\ref{obs:FreeVertex}, we have a coloring for the merged signed graph.
	\end{proof}
	
	Another observation of a similar flavor is that a bridge  (i.e., a cut edge) do not affect the coloring at all.
	
	\begin{lemma}\label{lem:edge-cut}
		If a connected signed graph $\widehat{G}$ contains a bridge $uv$, then any $(p,\phi)$-colorings $\widehat{G}-uv$ is also a $(p, \phi)$-coloring of $\widehat{G}$.
	\end{lemma}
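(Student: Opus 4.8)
The plan is to argue directly from the defining property of a $(p,\phi)$-coloring in its original (balanced-class) form: an assignment $f$ is a $(p,\phi)$-coloring precisely when, for every color $i$, the color class $X_i := \{v \in V(G) : i \in f(v)\}$ induces a \emph{balanced} subgraph, that is, $\widehat{G}[X_i]$ contains no negative cycle. The only feature that distinguishes $\widehat{G}$ from $\widehat{G}-uv$ is the presence of the edge $uv$, and this can influence the balancedness of a color class $X_i$ only when both $u$ and $v$ lie in $X_i$. So the whole claim reduces to showing that reinserting the bridge $uv$ into any such induced subgraph creates no new negative cycle.

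The one substantive ingredient is the standard fact that a bridge lies on no cycle. First I would recall that, by definition, the bridge $uv$ belongs to no cycle of $\widehat{G}$. Moreover, any cycle of an induced subgraph $\widehat{G}[X_i]$ is in particular a cycle of $\widehat{G}$; hence $uv$ belongs to no cycle of $\widehat{G}[X_i]$ either, for every color $i$.

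With this in hand the verification is immediate. Take an arbitrary $(p,\phi)$-coloring $f$ of $\widehat{G}-uv$ and fix a color $i$; by hypothesis $(\widehat{G}-uv)[X_i]$ has no negative cycle. Let $C$ be any cycle of $\widehat{G}[X_i]$. By the observation above, $C$ avoids the edge $uv$, so $C$ is also a cycle of $(\widehat{G}-uv)[X_i]$, and is therefore positive. Thus $\widehat{G}[X_i]$ has no negative cycle, i.e., $X_i$ is balanced in $\widehat{G}$. Since $i$ was arbitrary, every color class remains balanced and $f$ is a $(p,\phi)$-coloring of $\widehat{G}$.

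I do not expect any genuine obstacle here: the sole nontrivial point is the equivalence ``bridge = edge on no cycle'' together with the fact that cycles of an induced subgraph are cycles of the host graph, after which everything is bookkeeping over color classes. The only subtlety worth flagging is that the statement must be read with respect to the original balanced-class definition of a $(p,\phi)$-coloring rather than the refined $\pm[p]$ formulation, since in the latter the edge $uv$ would formally impose a constraint between $f(u)$ and $f(v)$; any such constraint, however, can be satisfied after the fact by re-selecting the colors on one side of the bridge via Observation~\ref{obs:FreeVertex}.
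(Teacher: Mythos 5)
Your proof is correct and is essentially the paper's own argument: the paper's entire proof is the one-line observation that a bridge lies on no cycle, hence on no negative cycle, which is exactly the core of your verification over color classes. Your additional remark about reading the statement in the balanced-class formulation (rather than the refined $\pm[p]$ one) is a reasonable clarification but does not change the substance.
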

	
	\begin{proof}
		That is simply because $uv$ belong to no cycle, in particular to no negative cycle.
	\end{proof}
	
	We denote by $K_4^{\bullet}$ the graph obtained from $K_4$ by subdividing one edge exactly once.  Let $\widehat{K}_4^{\bullet}$ be the signed graph on $K_4^{\bullet}$ where all edges except one edge incident to the vertex of degree $2$ are negative, see \Cref{fig:K4Bullet}. In the figures, negative edges are in red and solid line, while positive edges are in blue and dashed.
	
	\begin{figure}[!htbp]
		\centering
		\begin{subfigure}[t]{.45\textwidth}
			\centering
			\begin{tikzpicture}[scale=.45]		
				\draw [line width=0.4mm, dotted, blue] (0,3) to (6,0);
				\draw [line width=0.4mm, red] (0,3) to (-3,0) to (3,0) to (0,3);
				\draw [line width=0.4mm, red] (-3,0) to (0,-3) to (6,0); 
				\draw [line width=0.4mm, red] (-3,0) to (0,-3) to (3,0); 
				
				\draw [fill=white,line width=0.5pt] (0,3) node[above] {$x$} circle (4pt);  
				\draw [fill=white,line width=0.5pt] (3,0) node[right=0.5mm] {$y$} circle (4pt); 
				\draw [fill=white,line width=0.5pt] (6,0) node[right=0.5mm] {$t$} circle (4pt); 
				\draw [fill=white,line width=0.5pt] (-3,0) node[left=0.5mm] {$z$} circle (4pt); 
				\draw [fill=white,line width=0.5pt] (0,-3) node[below] {$w$} circle (3.5pt); 
			\end{tikzpicture}
			\caption{$\widehat{K}_4^{\bullet}$}
			\label{fig:K4Bullet}	
		\end{subfigure}
		\begin{subfigure}[t]{.45\textwidth}
			\centering
			\begin{tikzpicture}[scale=.45]
				\draw [line width=0.4mm, red] (0,3) to (-3,0) to (3,0) to (0,3);
				\draw [line width=0.4mm, red] (-3,0) to (0,-3); 
				\draw [line width=0.4mm, red] (-3,0) to (0,-3) to (3,0); 
				\draw [line width=0.4mm, red] (0,3) to (0,-3); 

				\draw [fill=white,line width=0.5pt] (0,3) node[above] {$x$} circle (4pt);  
				\draw [fill=white,line width=0.5pt] (3,0) node[right=0.5mm] {$y$} circle (4pt); 
				\draw [fill=white,line width=0.5pt] (-3,0) node[left=0.5mm] {$z$} circle (4pt); 
				\draw [fill=white,line width=0.5pt] (0,-3) node[below] {$w$} circle (3.5pt); 
			\end{tikzpicture}
			\caption{$(K_4, -)$}
			\label{fig:K4}
		\end{subfigure}
		\caption{Subcubic graphs $\widehat{K}_4^{\bullet}$ and $(K_4, -)$}
		\label{fig:example-nail}
	\end{figure}
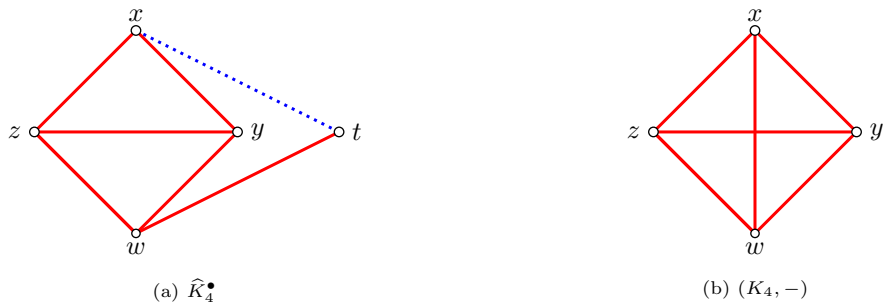

	In this work, we focus on the signed subcubic graphs and prove the following main result. 
	
	\begin{theorem}\label{thm:fractional-main}
		Every signed subcubic graph $\widehat{G}$ not switching equivalent to $(K_4,-)$ admits a $(5, 3)$-coloring. 
	\end{theorem}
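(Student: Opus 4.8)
The plan is to argue by induction on $|V(G)|$: I take a counterexample $\widehat{G}$ minimizing the number of vertices and, among these, the number of edges, and aim to force $\widehat{G}\equiv(K_4,-)$. Since $(5,3)$-colorability is switching invariant I may switch freely, and by the block decomposition lemma together with \Cref{lem:edge-cut} the counterexample is $2$-connected and bridgeless, so $\delta(\widehat{G})\ge 2$; the first phase pushes this to the cubic case. Throughout I use the refined signed formulation: a color set is a triple in $\binom{[5]}{\pm 3}$, a negative edge $uv$ forbids $f(u)$ as colors for $v$ while a positive edge forbids $-f(u)$, and the governing numerical feature is that $\pm[5]$ is five antipodal pairs $\{i,-i\}$, with a triple being legal exactly when it meets each pair at most once.

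For a vertex $v$ of degree $2$ with neighbors $u,w$, after switching I assume $\sigma(uv)=+$. If $uw\notin E(G)$ I \emph{suppress} $v$, deleting it and adding $uw$ with sign $\sigma(uv)\sigma(vw)$; the result is subcubic with fewer vertices. If it is switching equivalent to $(K_4,-)$ then $\widehat{G}\equiv\widehat{K}_4^{\bullet}$, a case I settle by an explicit $(5,3)$-coloring, and otherwise induction colors it. The key point is that the coloring always extends over $v$: the set forbidden for $v$ is $-f(u)$ together with $f(w)$ or $-f(w)$, and a short case check shows that the constraint carried by the new edge $uw$ (namely $f(u)\cap f(w)=\emptyset$ when $uw$ is negative, or $-f(u)\cap f(w)=\emptyset$ when positive) is \emph{exactly} what prevents this forbidden set from containing a full antipodal pair $\{i,-i\}$. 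Hence for each of the five pairs at least one of $\{i,-i\}$ remains available, so a legal triple for $v$ exists (using \Cref{obs:FreeVertex}), contradicting minimality. The remaining degree-$2$ vertices are those with $uw\in E(G)$, i.e.\ lying in a triangle, which I fold into the cubic analysis.

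So assume $\widehat{G}$ is cubic and $\not\equiv(K_4,-)$. Deleting a single vertex is useless, since its three neighbors forbid up to nine of the ten colors; the right operation is to delete an edge $uv$ and then suppress the two resulting degree-$2$ vertices $u$ and $v$. Each suppression is reversible by precisely the extension argument above, and because that extension leaves a color free in \emph{every} antipodal pair, I retain enough slack when lifting the coloring to also satisfy the reinstated constraint on $uv$. In the generic case this strictly decreases the vertex count and produces the contradiction.

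The main obstacle is the degenerate situations in which this last reduction is blocked: suppressing $u$ joins its other two neighbors, creating a forbidden parallel edge whenever those neighbors are already adjacent (triangle-dense neighborhoods), and the reduced graph may collapse to $(K_4,-)$. This is not incidental — in $(K_4,-)$ every pair of vertices is adjacent, so every attempted suppression yields a multi-edge, which is exactly what makes it irreducible and lets the bound $\tfrac{5}{3}<2$ bite. The heart of the proof is therefore a local structural analysis, most cleanly organized by discharging or by bounding the number of triangles through a vertex, showing that every cubic signed graph other than $(K_4,-)$ admits an edge whose removal-and-suppression avoids both multi-edges and a collapse to $(K_4,-)$. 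The finitely many tight configurations, including $\widehat{K}_4^{\bullet}$, are then dispatched by explicit $(5,3)$-colorings, conveniently verified as homomorphisms into the signed Kneser target on $\binom{[5]}{\pm 3}$.
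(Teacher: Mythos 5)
Your suppression step for a single degree-$2$ vertex is correct, and it is a nice observation: with the paper's sign conventions, the forbidden set at the suppressed vertex contains a full antipodal pair $\{i,-i\}$ exactly when the sign constraint on the new edge $uw$ is violated, so any valid coloring of the suppressed graph leaves at least one available color in every one of the five pairs, and a legal triple for $v$ exists by \Cref{obs:FreeVertex}. The genuine gap is in the cubic case, where you delete an edge $uv$ and suppress both endpoints. The claim that you ``retain enough slack when lifting the coloring to also satisfy the reinstated constraint on $uv$'' is false. Suppose the edges $ua$, $ub$, $vc$, $vd$ and $uv$ are all negative, and the coloring $f$ of the reduced graph happens to satisfy $f(a)=f(c)=\{-1,-2,-3\}$ and $f(b)=f(d)=\{-3,-4,-5\}$; this is perfectly consistent with the two new edges $ab$ and $cd$ (both positive, and $-f(a)\cap f(b)=\emptyset$, $-f(c)\cap f(d)=\emptyset$). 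Then the available sets are $A_u=A_v=\{1,2,3,4,5\}$: each meets every antipodal pair, exactly as your extension lemma guarantees, yet a negative edge $uv$ demands two \emph{disjoint} triples inside a common $5$-element set, which is impossible. So the lifting can fail even when no multi-edge is created and no collapse to $(K_4,-)$ occurs; the deferred discharging analysis (which you assert but do not carry out) cannot rescue this, because the obstruction sits in the coloring extension itself, not in the structure of the graph. Minimality only hands you \emph{some} coloring of the reduced graph, and your argument gives no control over which one.

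This failure is precisely why the paper does not induct on the uniform $(5,3)$ statement. It instead proves the stronger \Cref{thm:main}: a $(5,\phi)$-coloring with $\phi(v)=6-d_G(v)$, so that any vertex whose degree drops during a reduction is guaranteed a strictly larger color set in the smaller graph. When a $3$-vertex $v$ of a cubic minimal counterexample is deleted (\Cref{claim:contradiction}), its three neighbors become $2$-vertices and therefore receive \emph{four} colors each; the proof then shrinks each neighbor's $4$-set to a suitable $3$-subset (via the case analysis there, in the spirit of \Cref{obs:proper_subset}) in such a way that three pairwise-compatible colors remain for $v$. That one extra color of slack at the neighbors is exactly what your uniform-list induction lacks at the step above, and securing the strengthened statement is what forces the paper's chain of structural claims (\Cref{claim:2-connected} through \Cref{claim:contradiction}) and the exclusion of the exceptional blocks $C^*_3$, $C^*_4$, $K_4^{\bullet}$, $K_4$ from \Cref{thm:main}. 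If you want to salvage your route, you would need either to prove a two-vertex analogue of your extension lemma that is strong enough to survive the worst case exhibited above (it is not true as stated), or to build the same kind of degree-sensitive strengthening into your induction hypothesis---at which point you have essentially reconstructed the paper's argument.
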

	
	This implies that the fractional balanced chromatic number of any signed subcubic graph is at most $\frac{5}{3}$, except for $(K_4,-)$. The bound of $\frac{5}{3}$ is tight and is achieved by the signed graph $\widehat{K}_4^{\bullet}$. A proof of $\chi_{\text{\it fb}}(\widehat{K}_4^{\bullet})=5/3$ is given in~\Cref{lem:5/3}.

	To prove \Cref{thm:fractional-main}, we will prove a stronger statement (\Cref{thm:main} below) for which we need the following notions. A {\em block} in a graph $G$ is a maximal $2$-connected subgraph of $G$. Let $C^*_3$ denote a triangle $xyz$ of $G$ such that $d_G(x)=d_G(y)=2,$ and $d_G(z)\leq 3$. Similarly, let $C^*_4$ denote a $4$-cycle $xyzw$ of $G$ such that $d_G(x)=d_G(y)=d_G(z)=2,$ and $d_G(w)\leq 3$. 
	
	\begin{theorem}\label{thm:main}
		Let $\widehat{G}$ be a signed connected subcubic graph with no block of its underlying graph isomorphic to any graph in $\{C^*_3, C^*_4, K_4^{\bullet}, K_4\}$. Let $\phi(v)=6-d_G(v)$ for every $v\in V(G)$. Then $\widehat{G}$ admits a $(5, \phi)$-coloring. 
	\end{theorem}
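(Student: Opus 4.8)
The plan is to argue by contradiction, taking $\widehat{G}$ to be a counterexample minimizing $|V(G)|+|E(G)|$. First I would invoke the two block lemmas (the $2$-connected block lemma and \Cref{lem:edge-cut}) to reduce to the case that $\widehat{G}$ is $2$-connected: a cut vertex or bridge splits $\widehat{G}$ into smaller pieces, each of which inherits the hypothesis of having no excluded block, hence is $(5,\phi)$-colorable by minimality, and these colorings merge via \Cref{obs:FreeVertex}. So every vertex has degree $2$ or $3$ and $\phi(v)\in\{3,4\}$. The quantitative heart of the argument is a local extension principle in the refined (signed) model: when a vertex $v$ is colored last, each already colored neighbor $u$ forbids exactly the conjugate-free, $\phi(u)$-element set $f(u)$ (if $uv$ is negative) or $-f(u)$ (if $uv$ is positive). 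Since each forbidden set is conjugate-free, a single neighbor never forbids both elements of a pair $\{i,-i\}$, and $v$ can still take a color from every pair that is not \emph{fully} forbidden. Hence $v$ extends whenever the number of fully forbidden pairs is at most $5-\phi(v)=d(v)-1$.

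\textbf{Degree-$2$ vertices are free.} Let $v$ have degree $2$ with neighbors $x,y$. I would \emph{suppress} $v$: delete it and add the edge $xy$ with sign $\sigma(xv)\sigma(vy)$. Provided $x,y$ are nonadjacent and the suppression does not create an excluded block, the resulting $\widehat{G'}$ is smaller, satisfies the hypotheses, and is $(5,\phi)$-colorable by minimality; crucially, the new edge enforces a constraint between $f(x)$ and $f(y)$. A short case analysis over the four sign combinations shows that, because of this constraint, \emph{no} pair is fully forbidden at $v$ when we reinsert it, so $v$ re-extends to its $\phi(v)=4$ colors, a contradiction. Consequently every degree-$2$ vertex of $\widehat{G}$ lies in a configuration that blocks suppression: either its neighbors are already adjacent (a degree-$2$ vertex inside a triangle) or suppression would create a parallel edge or a member of $\{C^*_3,C^*_4,K_4^{\bullet},K_4\}$. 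I would show that each such obstruction forces one of the excluded blocks $C^*_3,C^*_4$ (by analyzing the short cycles that must then carry the degree-$2$ vertices), contradicting the hypothesis. This step eliminates all degree-$2$ vertices and reduces the problem to the cubic case.

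\textbf{The cubic core and the main obstacle.} It then remains to $(5,3)$-color a $2$-connected cubic signed graph $\widehat{G}\neq K_4$ (all of $C^*_3,C^*_4,K_4^{\bullet}$ contain a degree-$2$ vertex, and in a $2$-connected graph the unique block is the whole graph, so $K_4$ is the only surviving forbidden block). This is exactly where the extension principle stops being automatic: two neighbors of a degree-$3$ vertex can jointly fully forbid as many as four pairs — take the opposite sets $\{1,2,3,4\}$ and $\{-1,-2,-3,-4\}$ — whereas only $d(v)-1=2$ are allowed. Thus greedy degeneracy is hopeless and global structure must be used. Here I would develop a second family of reducible configurations among degree-$3$ vertices (triangles, short cycles, and small edge-cuts), each deleted or contracted and then re-extended using \Cref{obs:FreeVertex} together with the product-sign trick of the degree-$2$ reduction; alternatively, since a $2$-connected cubic graph is bridgeless, one may start from a $2$-factor and perfect matching (Petersen's theorem), color the cycles of the $2$-factor, and repair the conflicts across the matching edges.

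The step I expect to be genuinely hard is precisely this cubic core: one must rule out every local configuration that would strand some degree-$3$ vertex with three mutually incompatible colored neighbors, \emph{unless} that configuration collapses to $K_4$ (the true exception) or to one of the excluded blocks. Controlling this, rather than the clean degree-$2$ reduction, is where the delicate case analysis — and the appearance of $K_4$ as the unavoidable obstruction — will concentrate.
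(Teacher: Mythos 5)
Your skeleton (minimal counterexample, reduce to $2$-connected, eliminate degree-$2$ vertices, then a cubic core) parallels the paper's, and your suppression argument is sound where it applies: one can check in all four sign cases that the product-signed edge $xy$ forces the colorings of $x$ and $y$ to leave every conjugate pair partially free at the reinserted vertex $v$. The genuine gap is your claim that every obstruction to suppression ``forces one of the excluded blocks $C^*_3, C^*_4$''. This is false. Take two vertex-disjoint triangles $xvy$ and $x'v'y'$ and add the edges $xx'$ and $yy'$: this graph is $2$-connected, so its unique block is the whole $6$-vertex graph, which is none of $C^*_3, C^*_4, K_4^{\bullet}, K_4$; it satisfies the hypotheses of the theorem, yet the degree-$2$ vertex $v$ has adjacent neighbors, so suppressing it creates a parallel edge. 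Likewise, $K_4$ with one edge subdivided twice satisfies the hypotheses, but suppressing either degree-$2$ vertex produces exactly the forbidden block $K_4^{\bullet}$. So unsuppressible configurations --- $(2,3,3)$-triangles, adjacent degree-$2$ vertices, $(2,2,3,3)$- and $(2,3,2,3)$-cycles --- genuinely occur under the hypotheses, and your plan has no mechanism for them; they cannot be wished away by contradiction with the block condition. This is precisely where the bulk of the paper's proof lives: Claims \ref{claim:3OR4cycles}--\ref{claim:2-3} eliminate each such configuration by deleting it, invoking minimality on the smaller graph, and explicitly re-extending the coloring, exploiting the slack in $\phi=6-d$ (a deleted vertex's neighbors drop in degree, hence gain a spare color that can be discarded judiciously).

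The second gap is the cubic core, which you leave as two alternatives, neither carried out. The Petersen/$2$-factor route is not an argument: ``repair the conflicts across the matching edges'' is the entire difficulty, and nothing in your extension principle controls it. The paper's resolution (\Cref{claim:contradiction}) is in fact local, not global: delete a $3$-vertex $v$; its three neighbors have degree $2$ in $G-v$ and hence receive conjugate-free $4$-sets; a finite case analysis of how three such $4$-sets can interact shows one can always shrink each to a $3$-subset and still find three colors for $v$. Your observation that two unrestricted neighbors can block four pairs overlooks exactly this freedom to shrink. Note also that the deletion step needs the earlier structural claims a second time (e.g.\ no adjacent triangles, \Cref{claim:adjacentT}) to guarantee that removing a $3$-vertex does not create a $C^*_3$ or $C^*_4$ block in the smaller graph, a point your sketch does not address. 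In short: your suppression trick is an attractive substitute for parts of \Cref{claim:2-2-2} and \Cref{claim:2-2}, but the obstruction analysis it rests on is wrong, and the cases it cannot reach --- short cycles carrying degree-$2$ vertices, and the cubic extension itself --- are the ones that constitute the actual proof.
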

	
	The rest of this paper is organized as follows. We give basic properties of fractional balanced colorings of signed subcubic graphs in \Cref{sec:Pre]}. In \Cref{sec:ProofFrac}, we prove \Cref{thm:fractional-main} using \Cref{thm:main}. \Cref{sec:ProofMain} is devoted to proving \Cref{thm:main}. Some remarks and further discussion are provided in \Cref{sec:Que}.

	\section{Preliminaries}\label{sec:Pre]}
	
	Some of the basic properties we will need are the following. Let $S=\{s_1,s_2,\ldots, s_t\}$ be a set of integers. Denote by $S^*$ the set of absolute values of the elements in $S$. For example, if $S=\{1, 2, 3, -3\}$, then $S^*=\{1, 2, 3\}$.  We denote by $|S|$ the cardinality of $S$. Clearly, $|S^*|\le |S|$.
	
	\begin{observation}\label{obs:proper_subset}
		Given a positive integer $p$, let $k_1$ and $k_2$ be two positive integers such that $2\leq k_i\leq p$ for $i\in [2]$. For any two sets $A_i\in {[p] \choose \pm k_i}$ for $i\in [2]$, there exists a proper subset $B_i\subsetneq A_i$ such that $B_i\in {[p] \choose \pm (k_i-1)}$ and $B_1^*\neq B_2^*$. 
	\end{observation}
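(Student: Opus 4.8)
The plan is to reduce the statement to a purely combinatorial fact about the absolute-value sets $A_1^*$ and $A_2^*$. First I would record the key structural point: since $-A_i\cap A_i=\emptyset$, the map $x\mapsto |x|$ is injective on $A_i$ (if $|x|=|y|$ with $x\neq y$, then $x=-y$, which would put $-y\in A_i\cap(-A_i)$), so $|A_i^*|=k_i$. Consequently, choosing a proper subset $B_i\subsetneq A_i$ with $|B_i|=k_i-1$ — which automatically lies in $\binom{[p]}{\pm(k_i-1)}$, as the condition $-B_i\cap B_i=\emptyset$ is inherited by any subset of $A_i$ — amounts to deleting exactly one signed element from $A_i$, and this deletion removes exactly one element from $A_i^*$. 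Thus $B_i^*$ ranges over precisely the $(k_i-1)$-element subsets of $A_i^*$ obtained by deleting a single element, and the whole task reduces to: delete one element from $A_1^*$ and one from $A_2^*$ so that the two resulting sets differ.

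I would then split into cases. If $k_1\neq k_2$, then $|B_1^*|=k_1-1\neq k_2-1=|B_2^*|$ no matter how the deletions are made, so any choice works. The remaining case is $k_1=k_2=:k$ with $2\le k$, which I would further split according to whether $A_1^*=A_2^*$. If $A_1^*\neq A_2^*$, I pick a distinguishing element $c\in A_1^*\triangle A_2^*$, say $c\in A_1^*\setminus A_2^*$; since $k\ge 2$ I can delete from $A_1$ some element whose absolute value is not $c$, so $c\in B_1^*$, while $c\notin A_2^*\supseteq B_2^*$ forces $c\notin B_2^*$, giving $B_1^*\neq B_2^*$. If instead $A_1^*=A_2^*=\{a_1,\dots,a_k\}$, I delete from $A_1$ the element of absolute value $a_1$ and from $A_2$ the element of absolute value $a_2$ (both exist and the choices are legitimate because $k\ge 2$); then $a_1\in B_2^*\setminus B_1^*$, so again $B_1^*\neq B_2^*$.

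The only place where any care is needed is the subcase $A_1^*=A_2^*$ with $k_1=k_2$, where one must delete elements of \emph{different} absolute value from the two sets; this is exactly why the hypothesis $k_i\ge 2$ is essential, since for $k=1$ both $B_i^*$ would be empty and hence equal. Everything else follows immediately from the injectivity of $x\mapsto|x|$ on each $A_i$, so I anticipate no genuine obstacle: the statement is an elementary choice fact, and the case structure above yields it directly.
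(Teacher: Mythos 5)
Your proof is correct. The paper states this result as an Observation and gives no proof at all, treating it as self-evident; your argument (injectivity of $x\mapsto|x|$ on each $A_i$, which makes $B_i^*$ exactly the result of deleting one element of $A_i^*$, followed by the three cases $k_1\neq k_2$, $A_1^*\neq A_2^*$, and $A_1^*=A_2^*$) is precisely the routine verification the authors leave implicit, including the correct identification of where the hypothesis $k_i\geq 2$ is actually needed.
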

	
	\begin{observation}\label{lem:InclusionColoring}
		Let $\widehat{G}$ be a signed graph and let $\phi_i: V(\widehat{G})\to [p]$ for $i\in [2]$ such that $\phi_2(v)\leq \phi_1(v)$ for every $v\in V(\widehat{G})$. If $\widehat{G}$ admits a $(p, \phi_1)$-coloring $f_1$,  then $\widehat{G}$ admits a $(p, \phi_2)$-coloring $f_2$ such that $f_2(v)\subseteq f_1(v)$ for each $v\in V(\widehat{G})$.
	\end{observation}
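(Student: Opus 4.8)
The plan is to obtain $f_2$ directly from $f_1$ by shrinking the color set at each vertex, relying on the fact that every defining condition of a $(p,\phi)$-coloring is monotone under passing to subsets. Concretely, for each vertex $v$ I would invoke the hypothesis $\phi_2(v)\leq \phi_1(v)=|f_1(v)|$ to choose an arbitrary subset $f_2(v)\subseteq f_1(v)$ with $|f_2(v)|=\phi_2(v)$. This assignment immediately satisfies the required containment $f_2(v)\subseteq f_1(v)$, and it is a well-defined assignment of $\phi_2(v)$ colors from $\pm[p]$ to each vertex.

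It then remains to check that $f_2$ is a genuine $(p,\phi_2)$-coloring, and here each of the three constraints is simply inherited from $f_1$. For the first condition I would note that $-f_2(v)\subseteq -f_1(v)$ and $f_2(v)\subseteq f_1(v)$, so $-f_2(v)\cap f_2(v)\subseteq -f_1(v)\cap f_1(v)=\emptyset$. For an edge $uv$ with $\sigma(uv)=+$, the same containments give $-f_2(u)\cap f_2(v)\subseteq -f_1(u)\cap f_1(v)=\emptyset$, and for $\sigma(uv)=-$ they give $f_2(u)\cap f_2(v)\subseteq f_1(u)\cap f_1(v)=\emptyset$. In every case the relevant intersection for $f_2$ is contained in the corresponding intersection for $f_1$, which is empty because $f_1$ is a valid coloring.

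I do not expect any genuine obstacle in this argument: the whole content is the elementary observation that intersections can only shrink when one replaces sets by subsets, so emptiness is preserved. The only point requiring the hypothesis is the cardinality equality $|f_2(v)|=\phi_2(v)$, which is guaranteed precisely by $\phi_2(v)\leq\phi_1(v)$, and the choice of which $\phi_2(v)$ colors to retain at each vertex may be made arbitrarily and independently across vertices without jeopardizing any edge condition.
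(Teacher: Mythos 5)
Your proposal is correct and is exactly the reasoning the paper intends: the statement is labeled an Observation and given no explicit proof precisely because, as you note, all defining conditions of a $(p,\phi)$-coloring (whether phrased as balance of each color class, which is hereditary under taking subsets, or as the intersection-emptiness conditions in the refined $\pm[p]$ formulation you use) are monotone under shrinking each vertex's color set. Your verification of the three intersection conditions is the complete argument.
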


	\begin{observation}\label{obs:subgraph}
		Let $\widehat{G}$ and $\widehat{H}$ be signed graphs with a homomorphism $\psi$ from $\widehat{G}$ to $\widehat{H}$. Assume $\widehat{H}$ admits a $(p, \phi)$-coloring for a given integer $p$ and a mapping $\phi:V(G)\to [p]$. Then $\widehat{G}$ admits a $(p, \phi')$-coloring where $\phi'(u)=\phi(\psi(u))$.  
	\end{observation}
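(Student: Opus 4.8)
The plan is to pull back the coloring of $\widehat{H}$ along the homomorphism $\psi$. First I would pin down the definition: a homomorphism $\psi$ of signed graphs means that after replacing $\sigma$ by some switching-equivalent signature $\sigma'$, the map $\psi$ sends each edge $uv$ of $G$ to an edge $\psi(u)\psi(v)$ of $H$ with $\sigma'(uv)=\pi(\psi(u)\psi(v))$, where $\pi$ is the signature of $\widehat{H}$. Switching $\widehat{G}$ from $\sigma$ to $\sigma'$ changes neither the vertex map $\psi$ (hence not $\phi'$) nor the existence of a $(p,\phi')$-coloring of the underlying signed graph, since colorability is invariant under vertex switching as already noted. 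I may therefore assume from the outset that $\psi$ preserves signs exactly.

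Next, let $g$ be a $(p,\phi)$-coloring of $\widehat{H}$ in the refined sense, assigning to each vertex $w$ of $H$ a set $g(w)$ of $\phi(w)$ colors from $\pm[p]$ with $-g(w)\cap g(w)=\emptyset$. I would simply set $f(u):=g(\psi(u))$ for every $u\in V(G)$ and verify the three defining conditions. Since $|f(u)|=|g(\psi(u))|=\phi(\psi(u))=\phi'(u)$ and $-f(u)\cap f(u)=-g(\psi(u))\cap g(\psi(u))=\emptyset$, each $f(u)$ is a legitimate color set of the correct size. For an edge $uv$ of $G$ with $\sigma(uv)=+$, sign-preservation gives $\pi(\psi(u)\psi(v))=+$, so the validity of $g$ yields $-f(u)\cap f(v)=-g(\psi(u))\cap g(\psi(v))=\emptyset$; symmetrically, if $\sigma(uv)=-$ then $f(u)\cap f(v)=g(\psi(u))\cap g(\psi(v))=\emptyset$. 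Hence $f$ is a $(p,\phi')$-coloring of $\widehat{G}$.

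I do not expect any genuine obstacle: the statement is essentially the functoriality of coloring under homomorphism, and once $\psi$ is taken to be sign-preserving the argument is a one-line substitution. The only point needing care is the bookkeeping around the definition of a signed-graph homomorphism, namely that it preserves signs only up to a switching of the domain; handling this cleanly is what the first step is for. A minor point worth stating explicitly is that the positive loops assumed at every vertex are respected, since the loop condition is precisely $-f(u)\cap f(u)=\emptyset$, which has already been checked.
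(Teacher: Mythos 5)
Your proposal is correct and is exactly the intended argument: the paper states this as an Observation and omits the proof, and the natural justification is precisely your pullback $f(u):=g(\psi(u))$ after first switching the domain so that $\psi$ preserves signs, using the switching-invariance of $(p,\phi)$-colorability that the paper has already noted. Your attention to the two delicate points---that a signed-graph homomorphism preserves signs only up to switching, and that edges mapped onto (positive) loops are handled by the condition $-f(u)\cap f(u)=\emptyset$---is sound and completes the details the paper leaves implicit.
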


	\begin{proposition}{\em \cite{KNWYZZ25+}}\label{prop:C-k}
		For each positive integer $k$ with $k\geq 2$, the negative cycle $C_{-k}$ admits a $(k, k-1)$-coloring. 
	\end{proposition}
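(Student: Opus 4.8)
The plan is to give an explicit construction after first normalizing the signature. Since $(p,\phi)$-colorability is invariant under switching (as recalled just before \Cref{obs:FreeVertex}, switching at a vertex amounts to replacing its color set $A$ by $-A$), I would first switch $C_{-k}$ to its canonical form: a cycle $v_1v_2\cdots v_k v_1$ in which the edge $v_kv_1$ is negative and all remaining edges are positive. Every negative cycle is switching equivalent to such a signed cycle, so it suffices to exhibit a $(k,k-1)$-coloring of this one.

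Working in the refined model with colors drawn from $\pm[k]$, I would then define, for each $i\in[k]$,
$$f(v_i) = \{\,1,2,\dots,i-1\,\} \cup \{\,-(i+1),-(i+2),\dots,-k\,\}.$$
That is, each vertex takes the positive copies of the labels smaller than its index and the negative copies of the labels larger than its index, omitting its own index $i$. Then $|f(v_i)| = (i-1)+(k-i) = k-1$ and $f(v_i)^* = [k]\setminus\{i\}$, so each $f(v_i)\in\binom{[k]}{\pm(k-1)}$; in particular no color and its negative are chosen together.

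It then remains to verify the two edge conditions. For a positive edge $v_iv_{i+1}$ one checks $-f(v_i)\cap f(v_{i+1})=\emptyset$: the absolute values carried by both endpoints are exactly the labels $j\notin\{i,i+1\}$, and each such $j$ is chosen with the same sign at both ends (positive if $j<i$, negative if $j>i+1$), so negating $f(v_i)$ turns this into the opposite sign and no element survives. For the negative edge $v_kv_1$ one checks $f(v_k)\cap f(v_1)=\emptyset$: every common label $j\in\{2,\dots,k-1\}$ appears as $+j$ at $v_k$ (since $j<k$) but as $-j$ at $v_1$ (since $j>1$), so the two sets are disjoint. The small cases, including $k=2$ where the canonical cycle is one positive and one negative edge between two vertices, are covered by the same formula.

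These computations are routine; the only real content is choosing the right pattern, and the main point to get right is that the single permitted sign-flip lands precisely on $v_kv_1$. The guiding idea behind the formula — which I would use to motivate it rather than include in full — is to treat each absolute value $j$ as an independent ``layer'': the vertices carrying $j$ are exactly those of index $\neq j$, and around the cycle the sign of $j$ must be constant across positive edges and must flip across the one negative edge. Deleting $v_j$ breaks the cycle into a path carrying at most one negative edge, so each layer is consistently signable, and the ``positive prefix / negative suffix'' rule above realizes all layers at once. The negative edge is then the unique place where $v_k$ and $v_1$ are forced to disagree, which is exactly what makes their color sets disjoint.
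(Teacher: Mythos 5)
Your proof is correct, but there is nothing in the paper to compare it against: \Cref{prop:C-k} is stated with a citation to \cite{KNWYZZ25+} and no proof is reproduced here, so your argument serves as a self-contained substitute rather than a variant of the paper's own reasoning. The two steps both check out. The normalization is legitimate, since switchings preserve the parity of negative edges on a cycle (so a negative cycle can always be switched to have exactly one negative edge, placed at $v_kv_1$ after relabeling) and a $(p,\phi)$-coloring transforms under switching at $v$ by replacing $f(v)$ with $-f(v)$. The assignment $f(v_i)=\{1,\dots,i-1\}\cup\{-(i+1),\dots,-k\}$ then satisfies the refined conditions exactly as you say: each $f(v_i)$ has size $k-1$ with $f(v_i)^*=[k]\setminus\{i\}$; on a positive edge $v_iv_{i+1}$ the common absolute values are $[k]\setminus\{i,i+1\}$ and each is taken with the same sign at both endpoints, so $-f(v_i)\cap f(v_{i+1})=\emptyset$; and on the negative edge, $f(v_k)=\{1,\dots,k-1\}$ and $f(v_1)=\{-2,\dots,-k\}$ are disjoint since one is all positive and the other all negative. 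The case $k=2$ (the digon) is indeed covered by the same formula. A further point in favor of your construction: it is compatible with the only use the paper makes of the proposition, namely \Cref{lem:5+-cycle} --- deleting the colors $\pm 6,\dots,\pm k$ from your coloring leaves every vertex with at least four colors when $k\geq 5$, so that corollary also follows directly from your explicit coloring.
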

	
	In particular, by removing colors $\pm 6, \ldots, \pm k$ when $k\geq 5$, we have:
	
	\begin{corollary}\label{lem:5+-cycle}
		For each positive integer $k$ with $k\geq 5$, the negative cycle $C_{-k}$ admits a $(5, 4)$-coloring. 
	\end{corollary}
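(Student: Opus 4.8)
The plan is to derive this directly from \Cref{prop:C-k} by shrinking the color palette from $\pm[k]$ down to $\pm[5]$. First I would invoke \Cref{prop:C-k} to obtain a $(k,k-1)$-coloring $f$ of the negative cycle $C_{-k}$, so that each vertex $v$ receives a set $f(v)\in\binom{[k]}{\pm(k-1)}$. Since $-f(v)\cap f(v)=\emptyset$, no absolute value is repeated within $f(v)$, and thus the set of absolute values $f(v)^*$ is a $(k-1)$-subset of $[k]$; equivalently, exactly one absolute value of $[k]$ is absent from each vertex.

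Next I would delete from every color set all colors of absolute value at least $6$, that is, replace each $f(v)$ by $f(v)\cap\pm[5]$. Passing to subsets preserves every constraint of a balanced coloring: the condition $-A\cap A=\emptyset$ at each vertex persists, and on each (negative) edge the disjointness requirement $f(u)\cap f(v)=\emptyset$ continues to hold for the restricted sets. Hence the restriction is a legitimate partial coloring using only colors from $\pm[5]$. The key counting point is that, because a $(k,k-1)$-coloring omits exactly one absolute value from all of $[k]$ at each vertex, at most one of the five absolute values $1,\ldots,5$ can be missing; therefore every vertex retains at least $4$ colors. This produces a $(5,\phi')$-coloring with $\phi'(v)\geq 4$ for all $v$ (and, when $k=5$, the coloring already uses exactly $4$ colors per vertex and no deletion is needed).

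Finally I would apply \Cref{lem:InclusionColoring} with the constant target $\phi_2\equiv 4$ to trim each color set down to exactly four colors, yielding the desired $(5,4)$-coloring. There is no genuine obstacle here; the only step requiring care is the verification that restricting to $\pm[5]$ never drops a vertex below four colors, which is precisely the counting observation above. In this sense the corollary is essentially immediate once \Cref{prop:C-k} is available.
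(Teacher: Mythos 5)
Your proposal is correct and is exactly the paper's argument: the paper derives the corollary from \Cref{prop:C-k} by "removing colors $\pm 6,\ldots,\pm k$," and your counting step (each vertex misses exactly one absolute value of $[k]$, so restricting to $\pm[5]$ leaves at least four colors per vertex) is precisely the verification the paper leaves implicit. The final trimming via \Cref{lem:InclusionColoring} is a harmless formality, also consistent with the paper's reading.
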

	
	Every balanced signed graph $(G,+)$ admits a $(k,k)$-coloring for any positive integer $k$, in particular, $\chi_{\text{\it fb}}(G,+)=1$.

	\begin{lemma}\label{lem:smalCase}
		Let $\phi_3$ be an assignment of integer $3$, $3$, and $4$ to the three vertices of $C_3$ and let $\phi_4$ be an assignment of $3$, $3$, $4$, and $4$ to the vertices of $C_4$. We then have the following claims.
		
		\begin{itemize}
			\item $(C_3, \sigma)$ admits a $(5,\phi_3)$-coloring for any $\sigma$.
			\item $(C_4, \sigma)$ admits a $(5,\phi_4)$-coloring for any $\sigma$.
			\item $(K_4^{\bullet}, \sigma)$ admits a $(5,3)$-coloring for any $\sigma$.
		\end{itemize} 
	\end{lemma}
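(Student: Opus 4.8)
The plan is to treat the three claims uniformly by first cutting down the signatures we must consider via switching invariance, and then splitting into a trivial \emph{balanced} case and a harder \emph{unbalanced} case. Since $(p,\phi)$-colorability is preserved under switching (switching at $v$ simply replaces $f(v)$ by $-f(v)$), for each underlying graph it suffices to color one representative from each switching class. For $C_3$ and $C_4$ there are exactly two classes, the balanced one and the unbalanced one. For $K_4^{\bullet}$, the degree-$2$ vertex $t$ lets us freely move signs along the path $x$-$t$-$w$ (only the product $\sigma(xt)\sigma(tw)$ is a switching invariant), so the classes correspond to those of signed $K_4$ with the virtual edge $xw$ carrying that product sign; there are three, namely the balanced one, the one with a single negative edge, and $(K_4,-)$.

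In every balanced case the argument is immediate: switch to the all-positive signature and color each vertex with a set of $\phi(v)$ colors drawn only from the positive part $\{1,\dots,5\}$. Then $-f(u)\cap f(v)=\emptyset$ holds automatically across every (positive) edge because all assigned colors are positive, and the required sizes $3,4\le 5$ fit. Switching the coloring back yields a valid coloring of the original signed graph.

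For the unbalanced cycles I would start from the base colorings of \Cref{prop:C-k}: the negative triangle $C_{-3}$ has a $(3,2)$-coloring and the negative $4$-cycle $C_{-4}$ has a $(4,3)$-coloring, each using colors only from $\pm[3]$ and $\pm[4]$ respectively. It then remains to raise the color counts to $\phi_3=(3,3,4)$ and $\phi_4=(3,3,4,4)$ using the fresh color pairs $\{\pm4\},\{\pm5\}$ (for $C_3$) or the single free pair $\{\pm5\}$ (for $C_4$) not touched by the base coloring. By \Cref{obs:FreeVertex} and the fact that these fresh pairs conflict with nothing already present, one distributes the extra colors to the vertices that need them, choosing the sign of each added color to respect the incident edge (same sign across a positive edge, opposite signs across a negative edge). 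For $C_{-3}$ this reduces to simply exhibiting a partition of all ten colors $\pm[5]$ into pairwise color-disjoint sets of sizes $3,3,4$, one per vertex.

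The genuine obstacle is the $(K_4,-)$ switching class of $K_4^{\bullet}$, that is $\widehat{K}_4^{\bullet}$, which is precisely the extremal example with $\chi_{\text{\it fb}}=5/3$; here there is no slack and a direct construction is unavoidable. I would exhibit an explicit $(5,3)$-coloring, for instance $f(x)=\{1,2,3\}$, $f(y)=\{-1,-2,4\}$, $f(z)=\{-3,-4,5\}$ on the negative triangle $xyz$ (which forces these three sets to be pairwise disjoint and leaves exactly one color free), together with $f(w)=\{1,2,-5\}$ (disjoint from $f(y)$ and $f(z)$) and $f(t)=\{3,4,5\}$ (disjoint from $f(w)$ and avoiding $-f(x)$ on account of the positive edge $xt$), and then verify the seven edge constraints one by one. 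The remaining single-negative-edge class of $K_4^{\bullet}$ is a relaxation of this one, with fewer disjointness constraints, and is handled by a similar but looser explicit assignment, completing the proof.
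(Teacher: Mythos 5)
Your handling of the first two bullets is correct: the balanced cases are immediate, the all-negative representative of a negative $C_3$ is colored by any partition of $\pm[5]$ into antipodal-free sets of sizes $3,3,4$, and augmenting the $(4,3)$-coloring of $C_{-4}$ from \Cref{prop:C-k} by colors of absolute value $5$ on the two vertices requiring four colors works whether those two vertices are adjacent or not. Your explicit $(5,3)$-coloring of $\widehat{K}_4^{\bullet}$ is also valid; all seven edge constraints check out. This is close in spirit to the paper, which likewise exhibits explicit colorings for the cycles and for $\widehat{K}_4^{\bullet}$.

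The gap is in the third bullet, in your reduction of the remaining signatures on $K_4^{\bullet}$. First, your class count is wrong. Switching classes of $K_4^{\bullet}$ do correspond bijectively to those of the virtual $K_4$, but isomorphism classes do not, because $\mathrm{Aut}(K_4^{\bullet})$ contains only the symmetries preserving the pair $\{x,w\}$. The single-negative-edge class of the virtual $K_4$ therefore splits into three pairwise inequivalent signed graphs on $K_4^{\bullet}$, according to whether the negative edge is the virtual edge $xw$ itself (then no triangle of $K_4^{\bullet}$ is negative; up to switching only $xt$ is negative), shares a vertex with it (exactly one negative triangle, \Cref{fig:K4bullet3}), or equals $yz$ (both triangles negative, \Cref{fig:K4bullet2}). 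So besides the balanced class and $\widehat{K}_4^{\bullet}$ there are three further classes, not one. Second, and more seriously, your reason for dismissing them --- that they are ``relaxations'' of $\widehat{K}_4^{\bullet}$ with ``fewer disjointness constraints'' --- is false: making a negative edge positive does not delete a constraint, it replaces $f(u)\cap f(v)=\emptyset$ by the incomparable condition $-f(u)\cap f(v)=\emptyset$. Concretely, your coloring of $\widehat{K}_4^{\bullet}$ is not a coloring of the signed graph of \Cref{fig:K4bullet2} (only $yz$ negative): there the edge $xy$ is positive, yet $-f(x)\cap f(y)=\{-1,-2\}\neq\emptyset$. Hence each remaining class needs its own argument, which you never supply. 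The paper colors the two classes of \Cref{fig:K4bullet23} by contracting positive edges to obtain a homomorphism onto $C_{-3}$ and invoking \Cref{obs:subgraph}; the class with only the subdivided path negative (which, note, is absent from the paper's own enumeration as well, so the miscount is an easy trap) can be handled the same way by contracting $yz$ to obtain $C_{-4}$ and applying \Cref{prop:C-k}.
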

	
	\begin{proof}
		First notice that every balanced signed graph admits a $(5,5)$-coloring, which is stronger than requested $(5,\phi)$-colorings in each case. So, it is enough to consider signatures that induce some negative cycle.
		
		For $C_{-3}$ ($C_{-4}$, respectively)  we assign the color set $\{1,2,3,4\}$ to the vertrex $v$ with $\phi_3(v)=4$ (the vertices with $\phi_4(x)=4$, respectively), and color sets $\{1,2,5\}$ and $\{3,4,5\}$ to the other two vertices.
		
		For $K_4^\bullet$ by symmetry, beside the balanced case, there are three switching equivalent classes signed graphs: the one in Figure \ref{fig:K4Bullet}, and the two in Figure \ref{fig:K4bullet23}.
		
		For $\widehat{K}_4^\bullet$, we define $f(t)=\{3,4,5\}$, $f(x)=\{1,2,3\}$, $f(y)=\{-2,-4,-5\}$, $f(z)=\{-1,-3,5\}$, and $f(w)=\{1,2,4\}$. It can be easily checked that $f$ is a $(5,3)$-coloring.
		
		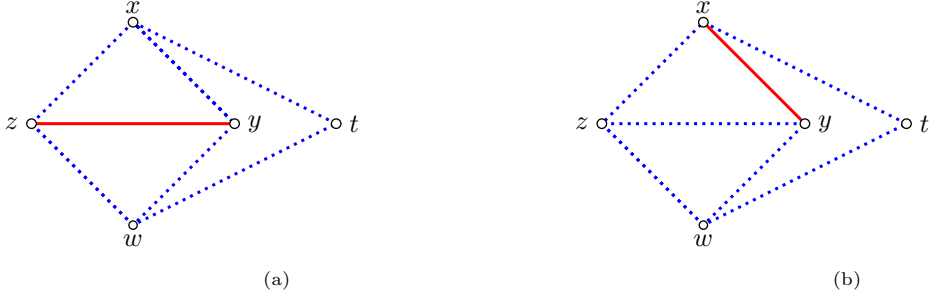
\begin{figure}[!htbp]
			\centering
			\begin{subfigure}[t]{.45\textwidth}
				\begin{tikzpicture}[scale=.45]		
					\draw [line width=0.4mm, dotted, blue] (0,3) to (6,0);
					\draw [line width=0.4mm, dotted, blue] (-3,0) to (0,3) to (3,0) to (0,3);
					\draw [line width=0.4mm, red] (-3,0) to (3,0);
					\draw [line width=0.4mm, dotted, blue] (-3,0) to (0,-3) to (6,0); 
					\draw [line width=0.4mm, dotted, blue] (-3,0) to (0,-3) to (3,0); 
					
					\draw [fill=white,line width=0.5pt] (0,3) node[above] {$x$} circle (4pt);  
					\draw [fill=white,line width=0.5pt] (3,0) node[right=0.5mm] {$y$} circle (4pt); 
					\draw [fill=white,line width=0.5pt] (6,0) node[right=0.5mm] {$t$} circle (4pt); 
					\draw [fill=white,line width=0.5pt] (-3,0) node[left=0.5mm] {$z$} circle (4pt); 
					\draw [fill=white,line width=0.5pt] (0,-3) node[below] {$w$} circle (3.5pt); 
				\end{tikzpicture}
				\caption{}
				\label{fig:K4bullet2}	
			\end{subfigure}
			\begin{subfigure}[t]{.45\textwidth}
				\begin{tikzpicture}[scale=.45]
					\draw [line width=0.4mm, dotted, blue] (0,3) to (6,0);
					\draw [line width=0.4mm, red](0,3) to (3,0);
					\draw [line width=0.4mm, dotted, blue] (3,0) to (-3,0) to (0,3);
					\draw [line width=0.4mm, dotted, blue] (-3,0) to (0,-3) to (6,0); 
					\draw [line width=0.4mm, dotted, blue] (-3,0) to (0,-3) to (3,0); 
					
					\draw [fill=white,line width=0.5pt] (0,3) node[above] {$x$} circle (4pt);  
					\draw [fill=white,line width=0.5pt] (3,0) node[right=0.5mm] {$y$} circle (4pt); 
					\draw [fill=white,line width=0.5pt] (6,0) node[right=0.5mm] {$t$} circle (4pt); 
					\draw [fill=white,line width=0.5pt] (-3,0) node[left=0.5mm] {$z$} circle (4pt); 
					\draw [fill=white,line width=0.5pt] (0,-3) node[below] {$w$} circle (3.5pt); 
				\end{tikzpicture}
				\caption{}
				\label{fig:K4bullet3}
			\end{subfigure}
			\caption{Two signed graphs on ${K}_4^{\bullet}$}
			\label{fig:K4bullet23}
		\end{figure}
		
		For the graph in Figure \ref{fig:K4bullet2}, contracting first $xt$ then $xw$ results in a $C_{-3}$ as a homomorphic image. Similarly, for the graph in Figure \ref{fig:K4bullet3}, contracting first $xt$ then $zw$ results in a $C_{-3}$ as a homomorphic image. Both are $(5,3)$-colorable and we are done.
	\end{proof}

	\begin{lemma}\label{lem:4vertices}
		Every signed graph $\widehat{G}$ on at most $4$ vertices which is not switching equivalent to $(K_4, -)$, admits a $(3,2)$-coloring.
	\end{lemma}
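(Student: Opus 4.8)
The plan is to reduce to $2$-connected blocks and then dispatch the finitely many signatures that can occur on each small block.

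\medskip

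\noindent\textbf{Reduction.} First I would use \Cref{lem:edge-cut} to delete all bridges and the block-gluing lemma stated above (colorings can be merged at a cut vertex by \Cref{obs:FreeVertex}): it then suffices to produce a $(3,2)$-coloring of each $2$-connected block, since an isolated vertex may be colored by $\{1,2\}$. The $2$-connected graphs on at most $4$ vertices are exactly $C_3$, $C_4$, $K_4-e$ (the diamond), and $K_4$. If a block carries a balanced signature it admits the constant $(3,3)$-coloring, hence a $(3,2)$-coloring by \Cref{lem:InclusionColoring}; so only unbalanced signatures on these four graphs remain. Finally, the only signed graph on at most $4$ vertices having $K_4$ as a block is $K_4$ itself, so the lemma reduces to: every unbalanced signature on $C_3,C_4,K_4-e,K_4$ other than $(K_4,-)$ is $(3,2)$-colorable, and $(K_4,-)$ is not.

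\medskip

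\noindent\textbf{Cycles.} An unbalanced signature on $C_3$ is switching equivalent to $C_{-3}$, which is $(3,2)$-colorable by \Cref{prop:C-k} with $k=3$. An unbalanced signature on $C_4$ is switching equivalent to $C_{-4}$, for which I would just exhibit the explicit coloring $\{1,2\},\{-1,3\},\{2,3\},\{1,2\}$ read around the cycle, with the unique negative edge joining the first two vertices, and verify the four edge conditions directly.

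\medskip

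\noindent\textbf{The graphs $K_4-e$ and $K_4$.} Here I would classify signatures by their balance invariants. For $K_4-e$ the signs of its two triangles form a complete switching invariant, yielding (besides the balanced case) the classes ``one negative triangle'' and ``two negative triangles''. For $K_4$ the key observation is that each edge lies in exactly two of the four triangles, so the product of the four triangle signs is positive and the number of negative triangles is even; hence, up to switching and isomorphism, the unbalanced signed $K_4$'s are exactly the ``two negative triangles'' class and $(K_4,-)$. For each non-exceptional class an explicit $(3,2)$-coloring is easy to find because colors become forced: a vertex joined by positive edges to two vertices with disjoint color sets $A,B\in\binom{[3]}{\pm 2}$ must avoid $-A\cup -B$, and since $|{-A\cup -B}|=4$ this leaves exactly the two remaining elements of $\pm[3]$. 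Concretely, for the two-negative-triangle $K_4$ with $ab$ negative and all other edges positive, $f(a)=\{1,2\}$, $f(b)=\{-1,3\}$, $f(c)=f(d)=\{2,3\}$ works; the diamond classes are handled the same way (and for the two-negative-triangle diamond one may alternatively identify the two nonadjacent vertices to obtain $C_{-3}$ as a homomorphic image and pull back a coloring via \Cref{obs:subgraph}).

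\medskip

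\noindent\textbf{The exception.} It remains to see that $(K_4,-)$ admits no $(3,2)$-coloring: since every edge is negative, the four color sets would have to be pairwise disjoint members of $\binom{[3]}{\pm 2}$, requiring $8$ distinct elements of the $6$-element set $\pm[3]$, which is impossible. The main obstacle is the bookkeeping in the $K_4$ and diamond casework---correctly enumerating the switching classes (via the parity-of-negative-triangles observation) and confirming that $(K_4,-)$ is the \emph{unique} obstruction while the two-negative-triangle $K_4$ remains colorable.
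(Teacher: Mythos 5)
Your proof is correct, but it follows a genuinely different route from the paper's. You reduce to $2$-connected blocks and then enumerate the switching classes of $C_3$, $C_4$, the diamond, and $K_4$ (via the triangle-sign invariant and the parity of the number of negative triangles), supplying explicit colorings or homomorphisms to $C_{-3}$ class by class. The paper instead gives a single contraction argument with no case analysis: any such signed graph is a subgraph of some $(K_4,\sigma)$ not switching equivalent to $(K_4,-)$ (a completion step it glosses over, just as you gloss over some diamond cases); by the very parity observation you use, such a $(K_4,\sigma)$ has two \emph{positive} triangles, and any two triangles of $K_4$ share an edge; after switching so that this shared edge is positive, identifying its endpoints is a sign-preserving homomorphism onto a signed triangle, i.e.\ $C_{+3}$ or $C_{-3}$, and a $(3,2)$-coloring is pulled back via \Cref{obs:subgraph} and \Cref{prop:C-k}. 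That contraction trick is the unifying device your argument lacks: it disposes of $C_4$, the diamond, and $K_4$ with all their signatures at once, and even renders the block decomposition unnecessary. What your version buys in exchange is explicitness --- a concrete coloring in every class, plus the bonus fact (not needed for the lemma, though it matches the spirit of \Cref{lem:5/3}) that $(K_4,-)$ admits no $(3,2)$-coloring. Two polish points if you keep your route: the one-negative-triangle diamond, which you leave as ``handled the same way,'' should be written out (e.g.\ $f(a)=\{1,2\}$, $f(b)=\{2,3\}$, $f(c)=\{1,-3\}$, $f(d)=\{1,2\}$, where $cd$ is the missing edge and $bc$ the unique negative edge); and your ``two forced remaining colors'' heuristic is not automatically sound, since the two leftover elements of $\pm[3]$ can form a pair $\{x,-x\}$, which is not a valid color set --- so the explicit verifications are genuinely needed rather than a formality.
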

	
	\begin{proof}
		Any such signed graph is a subgraph of $(K_4,\sigma)$ that is not switching equivalent to $(K_4,-)$. Since each edge of $(K_4,\sigma)$ is in exactly two triangles, there exist two positive triangles sharing an edge. By possibly a switching, we may assume this edge is positive. Contracting this edge results in a homomorphic image which is either $C_{+3}$ or $C_{-3}$ both of which admit a $(3,2)$-coloring.
	\end{proof}
	
	\begin{lemma}\label{lem:5/3}
		$\chi_{\text{\it fb}}(\widehat{K}_4^{\bullet})=\frac{5}{3}$.
	\end{lemma}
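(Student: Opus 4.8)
The plan is to prove the two inequalities $\chi_{\text{\it fb}}(\widehat{K}_4^{\bullet})\le \frac{5}{3}$ and $\chi_{\text{\it fb}}(\widehat{K}_4^{\bullet})\ge \frac{5}{3}$ separately. The upper bound is immediate from the third item of \Cref{lem:smalCase}: since $(K_4^{\bullet},\sigma)$ admits a $(5,3)$-coloring for every signature $\sigma$, in particular $\widehat{K}_4^{\bullet}$ admits one, and hence by definition $\chi_{\text{\it fb}}(\widehat{K}_4^{\bullet})\le \frac{5}{3}$.

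For the lower bound I would use a density argument built on $\beta(\widehat{K}_4^{\bullet})$, the size of a largest balanced set, so the first step is to show $\beta(\widehat{K}_4^{\bullet})=3$. Using the labels of \Cref{fig:K4Bullet}, I would check that none of the five $4$-vertex subsets of $\{x,y,z,w,t\}$ is balanced: three of them ($\{x,y,z,w\}$, $\{x,y,z,t\}$, $\{y,z,w,t\}$) contain a negative triangle ($xyz$ or $yzw$, each with three negative edges), while the remaining two ($\{x,z,w,t\}$ and $\{x,y,w,t\}$) contain a negative $4$-cycle ($x$-$z$-$w$-$t$-$x$ and $x$-$y$-$w$-$t$-$x$, respectively, each carrying exactly three negative edges). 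Since the full vertex set also contains the negative triangle $xyz$, no set of four or more vertices can be balanced, so $\beta(\widehat{K}_4^{\bullet})\le 3$; as $\{x,t,w\}$ induces the path $x$-$t$-$w$ and is balanced, in fact $\beta(\widehat{K}_4^{\bullet})=3$.

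The second step converts this into the bound on $\chi_{\text{\it fb}}$. Suppose $\widehat{K}_4^{\bullet}$ admits a $(p,q)$-coloring with color classes $C_1,\dots,C_p\subseteq V(\widehat{K}_4^{\bullet})$. Each $C_i$ is balanced, hence $|C_i|\le \beta(\widehat{K}_4^{\bullet})=3$, while every one of the $5$ vertices lies in exactly $q$ of the classes. Double-counting the vertex--color incidences gives $5q=\sum_{i\in[p]}|C_i|\le 3p$, so $\frac{p}{q}\ge \frac{5}{3}$; taking the infimum over all valid $(p,q)$-colorings yields $\chi_{\text{\it fb}}(\widehat{K}_4^{\bullet})\ge \frac{5}{3}$, which together with the upper bound proves the claim.

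I expect the only genuine obstacle to be the determination $\beta(\widehat{K}_4^{\bullet})=3$: the counting argument is forced once this value is known, and the upper bound is merely quoted, so the care lies entirely in making the case analysis over the five $4$-subsets exhaustive and in correctly certifying a negative cycle inside each one.
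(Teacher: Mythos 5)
Your proposal is correct and follows essentially the same route as the paper: the upper bound is quoted from \Cref{lem:smalCase}, and the lower bound is the counting bound $\chi_{\text{\it fb}}(G,\sigma)\geq |V(G)|/\beta(G,\sigma)$ applied with $\beta(\widehat{K}_4^{\bullet})=3$. The only difference is that the paper cites \cite{KNWYZZ25+} for that inequality and asserts $\beta(\widehat{K}_4^{\bullet})=3$ without detail, whereas you supply both ingredients yourself --- the double-counting derivation of $5q\leq 3p$ and the exhaustive (and correct) check that every $4$-subset of $\{x,y,z,w,t\}$ induces a negative triangle or a negative $4$-cycle.
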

	\begin{proof}
		The upper bound $\chi_{\text{\it fb}}(\widehat{K}_4^{\bullet})\leq \frac{5}{3}$ is already shown in \Cref{lem:smalCase}.
		
		Similar to graph cases, it was shown in \cite{KNWYZZ25+} that the fractional balanced chromatic number of a signed graph satisfies $$\chi_{\text{\it fb}}(G,\sigma)\geq \frac{|V(G)|}{\beta(G,\sigma)}$$
		Thus, noticing that the size of a maximum balanced set in $\widehat{K}_4^{\bullet}$ is 3, we obtain the desired lower bound.
	\end{proof}

	\subsection[ProofFrac]{The proof of \Cref{thm:fractional-main}}\label{sec:ProofFrac}

	We first derive \Cref{thm:fractional-main} from \Cref{thm:main}. 
	
	\medskip
	\noindent
	\emph{Proof of \Cref*{thm:fractional-main}.}
	Let $\widehat{G}$ be a subcubic signed graph on $n$ vertices which is a not $(K_4,-)$. We consider the following cases:
	
	\textbf{Case (1).} The underlying graph of $\widehat{G}$ is isomorphic to one of $C_3, C_4$ or $K_4^{\bullet}$. In this case, by \Cref{lem:smalCase}, we are done. 
	
	\textbf{Case (2).} ${G}$ contains no block isomorphic to any graph in $\{C^*_3, C^*_4, K_4^{\bullet}\}$. By~\Cref{thm:main}, $\widehat{G}$ admits a $(5, \phi)$-coloring $f$ where $\phi(v)=6-d_G(v)$. By \Cref{lem:InclusionColoring}, since $6-d_G(v)\geq 3$ for $G$ being subcubic, $\widehat{G}$ admits a $(5, 3)$-coloring.
	
	\textbf{Case (3).} ${G}$ contains a block $H$ whose underlying graph is in $\{C^*_3, C^*_4, K_4^{\bullet}\}$. We remove all vertices of $H$ except the one connecting it to the rest of $\widehat{G}$. We color the resulting graph by an induction hypothesis and then extend the coloring to the rest of $H$ by \Cref{lem:InclusionColoring}. \hfill \qed
	

	\subsection[extension]{Extension of partial $(5, \phi)$-colorings}
	
	We need a further preparation to complete the proof of \Cref{thm:main}. We first have the following observation.
	
	\begin{observation}\label{obs:1-vertex}
		Let $(G,\sigma)$ be a signed graph with a $1$-vertex $u$ whose neighbor is $v$ and $\sigma(uv)=-$. Let $\phi: V(G)\to[p]$ be a mapping and assume $f$ is a $(p,\phi)$-coloring of $(G,\sigma)-u$. Then for any $X\in \binom{[p]}{\pm\phi(u)}$ such that $X\cap f(v)=\emptyset$, the assignment $f(u)=X$ is an extension of $f$ to a $(p,\phi)$-coloring of $(G,\sigma)$.
		
	\end{observation}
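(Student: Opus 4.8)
The plan is to observe that, because $u$ is a $1$-vertex, passing from $(G,\sigma)-u$ to $(G,\sigma)$ reintroduces exactly one vertex, $u$, together with a single edge, the negative edge $uv$. Therefore checking that the extended assignment is a $(p,\phi)$-coloring reduces to verifying only the conditions that actually involve $u$; every other condition is inherited unchanged from the hypothesis that $f$ is already a $(p,\phi)$-coloring of $(G,\sigma)-u$.

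First I would recall the refined definition of a $(p,\phi)$-coloring: each vertex $w$ receives a set $f(w)$ of $\phi(w)$ colors from $\pm[p]$ with $-f(w)\cap f(w)=\emptyset$, and for each edge $xy$ one requires $-f(x)\cap f(y)=\emptyset$ when $\sigma(xy)=+$, and $f(x)\cap f(y)=\emptyset$ when $\sigma(xy)=-$. Setting $f(u)=X$, the vertex condition at $u$ demands that $X$ have $\phi(u)$ elements and satisfy $-X\cap X=\emptyset$; both are guaranteed precisely by the assumption $X\in\binom{[p]}{\pm\phi(u)}$.

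Next, since $\sigma(uv)=-$, the unique edge condition involving $u$ is the negative-edge requirement $f(u)\cap f(v)=\emptyset$, that is, $X\cap f(v)=\emptyset$, which is exactly the stated hypothesis on $X$. As no other edge is incident to $u$, no further constraint is introduced, and all constraints not mentioning $u$ continue to hold because $f$ colors $(G,\sigma)-u$. Hence the extension $f(u)=X$ yields a $(p,\phi)$-coloring of $(G,\sigma)$.

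There is no substantive obstacle in this argument; the only point deserving care is to invoke the correct form of the edge condition. A negative edge forbids the two color sets from sharing a color outright, rather than from sharing a color with a sign flipped, so it is the plain intersection $X\cap f(v)$ — and not $-X\cap f(v)$ — that must be empty.
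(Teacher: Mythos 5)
Your proof is correct and coincides with the paper's (implicit) reasoning: the paper states this as an observation with no written proof, precisely because, under the refined definition of a $(p,\phi)$-coloring, the only constraints involving $u$ are the vertex condition $-X\cap X=\emptyset$ with $|X|=\phi(u)$ (guaranteed by $X\in\binom{[p]}{\pm\phi(u)}$) and the negative-edge condition $X\cap f(v)=\emptyset$, which is the stated hypothesis. Your careful remark that a negative edge forbids the plain intersection, not the sign-flipped one, is exactly the right point to check.
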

	
	The following notion will be used. Let $f$ be a $(5, \phi)$-coloring of $\widehat{G}$. For any $1$-vertex $v$ and its neighbor $u$, we define the \emph{available color set} of $v$ with respect to $f$ by $A_f(v):=\pm [5] \setminus f(u)$. Here, $|A^*_f(v)|=5$ and $|A_f(v)|=10-\phi(u)$, in particular, if $\phi(u)=6-d_G(u)$, then $|A_f(v)|=4+d_G(u)$. 
	
	\section[MainProof]{Proof of~\Cref{thm:main}}\label{sec:ProofMain}
	In this section, we assume that $\widehat{G}$ is a counterexample to~\Cref{thm:main} with the number of vertices being minimized. That is to say, $\widehat{G}$ is a signed subcubic graph with no block isomorphic to any element of $\mathcal{B}_0:=\{C^*_3, C^*_4, K_4^{\bullet}, K_4\}$ that does not admit a $(5, \phi)$-coloring with $\phi(v)=6-d_G(v)$. By minimality of $\widehat{G}$, any signed subcubic graph $\widehat{H}$ with no block isomorphic to any member of $\mathcal{B}_0$ and has fewer vertices than $\widehat{G}$, admits a $(5, \phi')$-coloring with $\phi'(v)=6-d_H(v)$. We may assume that $\widehat{G}$ is connected. Furthermore, by~\Cref{lem:4vertices}, $\widehat{G}$ has at least $5$ vertices, and by~\Cref{lem:5+-cycle}, $\widehat{G}$ must contain at least one $3$-vertex.

	
	
	\begin{claim}\label{claim:2-connected}
		$\widehat{G}$ is $2$-connected. In particular, $\delta(\widehat{G})\ge 2$.
	\end{claim}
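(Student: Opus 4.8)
The plan is to argue by contradiction from the minimality of $\widehat{G}$: if $\widehat{G}$ had a cut vertex, I would split it at that vertex, colour each piece by the induction hypothesis, and glue the colourings back together, contradicting that $\widehat{G}$ is a counterexample. Since $\widehat{G}$ is connected with at least five vertices, being $2$-connected is equivalent to having no cut vertex, and $\delta(\widehat{G})\ge 2$ then follows automatically (a $2$-connected graph on at least three vertices has minimum degree at least $2$). So it suffices to rule out cut vertices. Suppose $c$ is a cut vertex, let $C_1,\dots,C_k$ ($k\ge 2$) be the components of $\widehat{G}-c$, and set $\widehat{G}_i$ to be the signed subgraph induced on $V(C_i)\cup\{c\}$. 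First I would check that each $\widehat{G}_i$ inherits the hypotheses of \Cref{thm:main}: it is connected and subcubic, has strictly fewer vertices than $\widehat{G}$, and—because $c$ separates the branches—every cycle of $\widehat{G}$, and hence every block, lies inside a single $\widehat{G}_i$; thus no block of $\widehat{G}_i$ belongs to $\mathcal{B}_0$. By minimality each $\widehat{G}_i$ therefore admits a $(5,\phi_i)$-coloring with $\phi_i(v)=6-d_{G_i}(v)$.

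The key point, and the main obstacle, is that $\phi$ is governed by the \emph{global} degree $d_G$, whereas inside $\widehat{G}_i$ the cut vertex $c$ has smaller degree and so is assigned \emph{more} colours than the target value $\phi(c)=6-d_G(c)$. I would reconcile this as follows. For $v\neq c$ we have $d_{G_i}(v)=d_G(v)$, and at $c$ we have $d_{G_i}(c)\le d_G(c)$, so $\phi(v)\le\phi_i(v)$ throughout $\widehat{G}_i$; by \Cref{lem:InclusionColoring} each $\widehat{G}_i$ then admits a $(5,\phi)$-coloring, with the colour set of $c$ cut down to a set in $\binom{[5]}{\pm\phi(c)}$. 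Next, by \Cref{obs:FreeVertex}, in each of these colourings the colour set of $c$ may be replaced by one common set $A\in\binom{[5]}{\pm\phi(c)}$. I would then glue the $k$ colourings along $c$. Since the $\widehat{G}_i$ meet only at $c$, every cycle of $\widehat{G}$ stays within a single $\widehat{G}_i$, so each colour class—being a union of balanced sets that pairwise share at most the vertex $c$—contains no negative cycle and is therefore balanced. This produces a $(5,\phi)$-coloring of $\widehat{G}$, contradicting the choice of $\widehat{G}$. (This gluing step is exactly the block-merging principle underlying the unnamed block lemma of \Cref{sec:Pre]}.)

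Finally, I would note that this argument also subsumes degree-$1$ vertices: a vertex of degree $1$ forces its neighbour to be a cut vertex (as $\widehat{G}$ is connected with at least five vertices), and the corresponding branch is a single edge, which trivially admits the required colouring; alternatively one may dispatch pendant vertices directly via \Cref{obs:1-vertex}. Hence $\widehat{G}$ has no cut vertex, is $2$-connected, and in particular $\delta(\widehat{G})\ge 2$. The only genuine care needed is the bookkeeping of the degree-dependent $\phi$ at $c$ (handled by \Cref{lem:InclusionColoring}) and the verification that gluing at a single vertex creates no new negative cycle (handled by the fact that every cycle lives in one branch, together with \Cref{obs:FreeVertex} to align the colour of $c$); everything else is routine.
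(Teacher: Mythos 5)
There is a genuine gap at the step where you claim that each piece $\widehat{G}_i$ ``inherits the hypotheses of \Cref{thm:main}.'' Your justification is that every block of $\widehat{G}_i$ is a block of $\widehat{G}$, which is true as a statement about subgraphs; but membership in $\mathcal{B}_0$ is \emph{not} an isomorphism-invariant property of the block alone. The configurations $C^*_3$ and $C^*_4$ are defined by degree conditions in the host graph ($C^*_3$ is a triangle with two vertices of degree $2$ \emph{in the host graph}). Passing from $\widehat{G}$ to $\widehat{G}_i$ lowers the degree of the cut vertex $c$, and this can turn a legal triangle (or $4$-cycle) block of $\widehat{G}$ into a forbidden $C^*_3$ (or $C^*_4$) block of $\widehat{G}_i$. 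Concretely, let $G$ consist of a triangle $xyc$ with $d_G(x)=2$, a bridge from $y$ to a $5$-cycle, and a bridge from $c$ to another $5$-cycle. All blocks of $G$ (the triangle with degrees $2,3,3$, two bridges, two $5$-cycles) avoid $\mathcal{B}_0$, so $\widehat{G}$ satisfies the hypotheses of \Cref{thm:main}; yet $c$ is a cut vertex, and in the branch $\widehat{G}_1$ containing the triangle we have $d_{G_1}(c)=d_{G_1}(x)=2$, so the triangle is a $C^*_3$ block of $\widehat{G}_1$ and minimality cannot be invoked for that piece. The same defect infects your treatment of degree-$1$ vertices: deleting a pendant vertex (or splitting at its neighbour) can likewise create a $C^*_3$ or $C^*_4$. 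This is precisely the difficulty that makes the claim nontrivial. Your gluing machinery (\Cref{lem:InclusionColoring}, \Cref{obs:FreeVertex}, and the fact that every cycle stays inside one branch) is sound; what is missing is any argument for the cases where a piece does acquire a bad block.

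The paper circumvents exactly this issue in two ways. For degree-$1$ vertices it deletes $v$ and then performs a case analysis on the bad blocks that may have been created in $\widehat{G}-v$ (a signed $K_4^{\bullet}$, or a $C^*_3$/$C^*_4$ attached by a cut edge), finishing those cases with \Cref{lem:smalCase}, minimality, and the cut edge. For the remaining cut vertices it does not split at the vertex at all: since $G$ is subcubic with $\delta\geq 2$, a cut vertex forces a cut \emph{edge} $uv$, and the paper splits along $uv$ while keeping both endpoints, colouring $\widehat{G}_v$ together with the pendant vertex $u$ and $\widehat{G}_u$ together with the pendant vertex $v$. In these pieces only the pendant endpoint loses degree, and that vertex lies only in the bridge block, so no block can become $C^*_3$ or $C^*_4$; the two colourings are then merged via \Cref{lem:edge-cut}. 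If you want to salvage your vertex-splitting argument, you would have to add the same kind of case analysis for the newly created bad blocks, which essentially reconstructs the paper's proof.
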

	
	\begin{proof}
		We first show that there is no vertex of degree 1. Suppose to the contrary and assume $v$ is a vertex of degree 1 with $u$ being its only neighbor. If $v$ is a vertex of degree 1, then it is in no cycle and it can be given all the $5$ colors without being involved in inducing a negative cycle. The key point is to show that $\widehat{G}-v$ admits the required coloring. If  $\widehat{G}-v$  satisfies the conditions of the theorem, i.e., if $\widehat{G}-v$ contains none of  $C^*_3, C^*_4, K_4^{\bullet}, K_4$ as a block, then we have a coloring by the minimality of $G$. Otherwise, either  $\widehat{G}-v$ is signed graph on $K_4^{\bullet}$ for which a $(5,3)$-coloring is required.  That is provided in \Cref{lem:smalCase}. Or, $\widehat{G}-v$ has one of $C^*_3, C^*_4$ as a block. In this case we note two facts: 1. there should be an a cut edge connecting this $C^*_3$, or $C^*_4$ to the rest of the graph, and 2. the neighbor $u$ of $v$ is a vertex of degree $3$ in $\widehat{G}$ and hence requires only $3$ colors. We can then apply \Cref{lem:smalCase} to complete the coloring on $v$ and the subgraph $C^*_3$ or  $C^*_4$ we are working with. Then we can use the cut edge and minimality $\widehat{G}$ to complete the coloring to the rest of the graphs. 
		
		If $v$ is not of degree 1, then, since $G$ is subcubic, having a vertex cut implies having an cut edge $uv$. Let $\widehat{G}_v$ and $\widehat{G}_u$ be the components of $\widehat{G}-uv$ containing $v$ and $u$ respectively. Adding the vertex $u$ to $\widehat{G}_v$ and $v$ to $\widehat{G}_u$ we have two proper subgraphs of $\widehat{G}$ each satisfying the conditions of the theorem, that is to say neither contains one of $C^*_3, C^*_4, K_4^{\bullet}, K_4$ as a block. They each then admit a $(5, \phi)$-coloring that can be put together thanks to \Cref{lem:edge-cut}.		
	\end{proof}
	
	We observe that the bad blocks that might be created after deleting or cutting are either $C^*_3$ or $C^*_4$. The next claim guarantees that after deletion, we shall not create a block isomorphic to $C^*_3$ or $C^*_4$. 
	
	A \emph{$t$-cycle} in a signed graph $G$ is referred to as a \emph{$(d_1,d_2,\ldots,d_t)$-cycle} if $d_G(v_i)=d_i$ for every $i$ with $i\in[t]$. 
	
	\begin{claim}\label{claim:3OR4cycles}
		$\widehat{G}$ contains none of the following: a $(2,3,3)$-triangle, a $(2,3,2,3)$-cycle, or a $(2,2,3,3)$-cycle.
	\end{claim}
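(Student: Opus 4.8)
The plan is to prove \Cref{claim:3OR4cycles} by reducibility: assuming the minimal counterexample $\widehat{G}$ contains one of the three configurations, I would construct a $(5,\phi)$-coloring of $\widehat{G}$ with $\phi(v)=6-d_G(v)$, contradicting minimality. The guiding principle is a \emph{blocked-value count}. A vertex $x$ of degree $2$ needs $\phi(x)=4$ colours, i.e.\ it must receive $4$ of the $5$ values $\{1,\dots,5\}$, each with one sign, and such a choice exists precisely when at most one value $i\in[5]$ is \emph{blocked} in the sense that both $+i$ and $-i$ are forbidden by its already-coloured neighbours. Since a neighbour $w$ forbids, for each value, at most one sign (its colour set contains no $\pm$-pair), a value can be blocked only when one of its two signs is forbidden by one neighbour and the opposite sign by the other; so the whole game is to control the \emph{alignment} of the two neighbours' colour sets. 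Throughout I would use that colourability is switching-invariant to normalise signs locally.

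For the \textbf{balanced} versions of all three configurations this principle gives a clean reduction. Consider first the $(2,3,3)$-triangle $xyz$ with $d_G(x)=2$ and the triangle balanced: after switching I may assume $xy,xz,yz$ are all positive. I would delete $x$, colour $\widehat{G}-x$ by minimality (giving $y,z$ the four colours their new degree demands), and extend to $x$. Because $xy,xz$ are positive the forbidden sets are $-f(y),-f(z)$, so a value is blocked for $x$ exactly when $f(y),f(z)$ carry opposite signs on it; but $yz$ positive means $-f(y)\cap f(z)=\emptyset$, i.e.\ no value is oppositely aligned. Hence no value is blocked and $x$ extends. (Equivalently one may contract the positive edge $yz$, use \Cref{obs:subgraph} to colour the smaller admissible graph, and copy colours across the contracted positive edge.) For the balanced $(2,3,2,3)$- and $(2,2,3,3)$-cycles $v_1v_2v_3v_4$, the two degree-$2$ vertices are adjacent to the same pair of degree-$3$ vertices; I would delete both degree-$2$ vertices and add the edge between the two degree-$3$ vertices, giving it the sign of the (common, since the cycle is balanced) length-$2$ path through a deleted vertex. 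Colouring the reduced graph and extending each deleted vertex, the added edge forbids exactly the alignment that would block a colour, so again there are $0$ blocked values and both vertices extend.

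Before invoking minimality I must check that the reduced graph $\widehat{H}$ is admissible, i.e.\ subcubic, simple, and with no block in $\mathcal{B}_0=\{C^*_3,C^*_4,K_4^{\bullet},K_4\}$. The coincidences to handle are the two degree-$3$ vertices sharing an external neighbour, or the edge I wish to add being already present. I would dispose of these either by noting that such a coincidence already forces a forbidden block (or a graph on $\le 4$ vertices) inside $\widehat{G}$, contradicting the admissibility of $\widehat{G}$ or \Cref{lem:4vertices}, or by colouring the resulting small graph directly via \Cref{lem:smalCase}; $2$-connectivity from \Cref{claim:2-connected} keeps $\widehat{H}$ connected and free of new bridges.

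The hard part will be the \textbf{unbalanced} configurations. For an unbalanced triangle the sign of the surviving edge $yz$ is fixed, and a short computation shows that in \emph{both} possible cases the blocked values for $x$ are exactly the common colours of $f(y),f(z)$ of the single alignment that $yz$ does \emph{not} exclude, so an adversarial coloring of $\widehat{H}$ (for instance $f(y)=\pm f(z)$) can block up to four values; the same mismatch defeats the single-added-edge trick for the unbalanced $4$-cycles, since the two length-$2$ paths then have opposite signs and no single edge sign is favourable for both deleted vertices. Resolving this is the crux. My plan is (i) to rule out the extremal alignments $f(y)=\pm f(z)$ by showing they would force an external neighbour into a forbidden block or violate $2$-connectivity, and otherwise (ii) to repair a non-extendable coloring by a local exchange at one of the degree-$3$ neighbours, which (having only two neighbours in $\widehat{H}$) retains enough freedom in its own colour choice to be recoloured so as to destroy the bad alignment; as a uniform alternative I would, after an appropriate switching, exhibit a homomorphism from the configuration onto a negative triangle $C_{-3}$ (or onto a short negative cycle) and pull a coloring back via \Cref{obs:subgraph}, \Cref{prop:C-k} and \Cref{lem:5+-cycle}. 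I expect essentially all of the real work to sit in this unbalanced case analysis.
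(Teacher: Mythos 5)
Your treatment of the balanced configurations is sound, but the unbalanced configurations --- which you correctly identify as the crux --- are left genuinely unresolved, and none of the three strategies you sketch for them can be completed as stated. Strategy (i) is impossible in principle: the coloring $f$ of the reduced graph is handed to you by the minimality hypothesis, and whether $f(y)=\pm f(z)$ holds is a property of that coloring, not of the graph; nothing structural about $\widehat{G}$ (blocks, $2$-connectivity) constrains how the induction chooses to align the color sets of two vertices whose mutual edge you have removed from consideration. Strategy (iii) fails for a different reason: pulling a coloring back from $C_{-3}$ via \Cref{obs:subgraph} colors the configuration in isolation, with no compatibility with the already-fixed colors of the external neighbours $u'$ and $v'$; that observation applies when the whole graph maps to the target, not when only a subgraph does. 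Strategy (ii) is the right instinct but is only a restatement of the problem: you would need to prove that every non-extendable coloring of the reduced graph can be repaired by a local exchange, and the exchange at one degree-$3$ vertex is itself constrained by both its external neighbour and the other degree-$3$ vertex, so this is essentially the original difficulty again.

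The idea you are missing, and which the paper uses, is to delete not only the degree-$2$ vertices but also the edge $uv$ joining the two degree-$3$ vertices (when that edge exists), so that $u$ and $v$ become pendant vertices of the reduced graph $\widehat{H}_X$. Minimality then yields a coloring in which each of $u,v$ has an available set of size at least $6$ with respect to its unique remaining neighbour, a set that meets all five absolute values and contains at least one full $\pm$ pair. Since $d_G(u)=d_G(v)=3$, the final coloring of $\widehat{G}$ needs only $3$ colors on each of them, and this slack lets one simply \emph{choose} $g(u)$ and $g(v)$ with nearly disjoint absolute values --- in the paper, $g(u)=\{d_3,d_4,d_5\}$ and $g(v)=\{c_1,c_2,\pm d_5\}$, the sign on the shared value $5$ dictated by $\sigma(uv)$ --- after which the deleted degree-$2$ vertices can always be colored by sets of the form $\{-c_1,-c_2,-d_3,-d_4\}$. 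This works uniformly in the signature: the paper's proof has no balanced/unbalanced case split at all, only a short case analysis on the sign of $uv$ (and of $vw_2$ in the $(2,3,2,3)$ case). In short, your reduction leaves the alignment of the colors at $u$ and $v$ to the adversary, while the paper's reduction takes that alignment into its own hands; without that mechanism (or a proof that your repair step always succeeds), the argument does not go through.
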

	
	\begin{figure}[htbp]
		\centering  
		\begin{subfigure}[t]{.32\textwidth}
			\centering
			\begin{tikzpicture}[>=latex,
				roundnode/.style={circle, draw=black!90, thick, minimum size=5mm, inner sep=0pt},
				squarenode/.style={rectangle, draw=black!90, thick, minimum size=5mm, inner sep=0pt},
				scale=0.7
				]
				\node [roundnode] (v) at (1,0) {$v$};
				\node [roundnode] (u) at (-1,0) {$u$};
				\node [roundnode] (w) at (0,1.732) {$w$};
				\node [squarenode] (y) at (3, 0) {$v'$};
				\node [squarenode] (x) at (-3, 0) {$u'$};
				
				\draw [line width =1pt, black] (y)-- (v)--(u)--(x);
				\draw [line width =1pt, black] (u)--(w)--(v);
			\end{tikzpicture}
			\caption{$(2,3,3)$-triangle}  
			\label{fig:233-cycle}  
		\end{subfigure}
		\begin{subfigure}[t]{.32\textwidth}
			\centering
			\begin{tikzpicture}[>=latex,
				roundnode/.style={circle, draw=black!90, thick, minimum size=5mm, inner sep=0pt},
				squarenode/.style={rectangle, draw=black!90, thick, minimum size=5mm, inner sep=0pt},
				scale=0.7
				]
				\node [roundnode] (v) at (1,0) {$v$};
				\node [roundnode] (u) at (-1,0) {$u$};
				\node [roundnode] (w1) at (1,2) {$w_1$};
				\node [roundnode] (w2) at (-1,2) {$w_2$};
				\node [squarenode] (y) at (3, 0) {$v'$};
				\node [squarenode] (x) at (-3, 0) {$u'$};
				
				\draw [line width =1pt, black] (x)--(u)--(v)--(w1)--(w2)--(u);
				\draw [line width =1pt, black] (v)--(y);
			\end{tikzpicture}
			\caption{$(2,2,3,3)$-cycle}  
			\label{fig:2233-cycle} 
		\end{subfigure}
		\begin{subfigure}[t]{.32\textwidth}
			\centering
			\begin{tikzpicture}[>=latex,
				roundnode/.style={circle, draw=black!90, thick, minimum size=5mm, inner sep=0pt},
				squarenode/.style={rectangle, draw=black!90, thick, minimum size=5mm, inner sep=0pt},
				scale=0.7
				]
				\node [roundnode] (v) at (1.2,0) {$v$};
				\node [roundnode] (u) at (-1.2,0) {$u$};
				\node [roundnode] (w1) at (0,1.5) {$w_1$};
				\node [roundnode] (w2) at (0,-1.5) {$w_2$};
				\node [squarenode] (y) at (3, 0) {$v'$};
				\node [squarenode] (x) at (-3, 0) {$u'$};
				
				\draw [line width =1pt, black] (x)--(u)--(w1)--(v);
				\draw [line width =1pt, black] (u)--(w2)--(v)--(y);
			\end{tikzpicture}
			\caption{$(2,3,2,3)$-cycle}  
			\label{fig:2323-cycle} 
		\end{subfigure}
		\caption{Configurations in \Cref{claim:3OR4cycles}}
		\label{fig:cycles}
	\end{figure}  
	
	\begin{proof}
		Suppose, to the contrary, that $\widehat{G}$ contains one of the following: a $(2,3,3)$-triangle $wuv$, a $(2,2,3,3)$-cycle $w_1w_2uv$, or a $(2,3,2,3)$-cycle $w_1vw_2u$ where $d_G(u)=d_G(v)=3$ and $d_G(w)=d_G(w_i)=2$ for $i\in [2]$. See \Cref{fig:cycles}. Let $u'$ and $v'$ be the other neighbors of $u$ and $v$, respectively. Since $\widehat{G}$ is $2$-connected (\Cref{claim:2-connected}), $u'$ and $v'$ are distinct. By possibly a switching, we may assume that $uu',vv'$ are both negative and each edge of the cycle is negative except $uv$ for \Cref{fig:233-cycle} and \Cref{fig:2233-cycle} and $vw_2$ for \Cref{fig:2323-cycle}. Let $X=\{u,v\}$. 
		
		Let $\widehat{H}$ be the component of $\widehat{G}-X$ not containing $w$ or $w_i$ and let $\widehat{H}_X$ be the subgraph of $\widehat{G}$ by removing $w$ or both $w_i$'s and the edge $uv$. Note that $d_{H_X}(v)=d_{H_X}(u)=1$. Since $\widehat{H}_X$ is a proper subgraph of $\widehat{G}$ and, moreover, it contains no block isomorphic to any element of $\mathcal{B}_0$, hence admits a $(5, \phi')$-coloring $f$ where $\phi'(x)=6-d_{H_X}(x)$ for $x \in V(H_X)$. Furthermore, since $d_{H_X}(v)=d_{H_X}(u)=1$, $d_{H_X}(u')\geq 2$ and $d_{H_X}(v')\geq 2$ without loss of generality, assume that $\{c_1,c_2,c_3,c_4,\pm c_5\}\subseteq A_f(v)$ and $\{d_1,d_2,d_3,d_4, d_5,d_u\}\subseteq A_f(u)$ with $|c_k|=|d_k|=k$ for each $k\in[5]$ and $d_u\in \pm [5]\setminus \{d_1,d_2,d_3,d_4,d_5\}$. In each case, we define a new mapping $g$ as follows: 
		\begin{align*}
			g(v)&= 
			\begin{cases}
				\{c_1,c_2,-d_5\}, &\text{if $uv$ is negative};\\
				\{c_1,c_2,d_5\}, &\text{if $uv$ is positive or $u,v$ are not adjacent};
			\end{cases}\\
			g(u)&=\{d_3,d_4,d_5\}.
		\end{align*}
		Observe that $g(u)\subset f(u)$, $g(v)\subset f(v)$, and $g(u), g(v)\in {[5] \choose \pm 3}$. We shall extend such a coloring $g$ to be a $(5, \phi)$-coloring with $\phi(x)=6-d_G(x)$ for $x\in V(G)$.
		
		\begin{itemize}
			\item For the $(2,3,3)$-triangle $wuv$, recall that both of $wv$ and $wu$ are negative. Let $g(w)=\{-c_1,-c_2,-d_3,-d_4\}$.

			\item For the $(2,2,3,3)$-cycle $w_1w_2uv$, recall that $vw_1,w_1w_2,w_2u$ are all negative. Let $g(w_1)=\{-c_1,-c_2,d_3,d_4\}$, and $g(w_2)=\{c_1,c_2,-d_3,-d_4\}$.

			\item For the $(2,3,2,3)$-cycle $w_1vw_2u$, recall that edges $uw_1,vw_1,uw_2$ are negative. Since $u$ and $v$ are not adjacent in this case, $g(v)=\{c_1,c_2,d_5\}$ and $g(u)=\{d_3,d_4,d_5\}$. For vertices $w_1$ and $w_2$, we define that $g(w_1)=\{-c_1,-c_2,-d_3,-d_4\}$ and
			\begin{equation*}
				g(w_2)=
				\begin{cases}
					\{-c_1,-c_2,-d_3,-d_4\}, &\text{if $vw_2$ is negative};\\
					\{-c_1,-c_2,d_3,d_4\}, &\text{if $vw_2$ is positive}.
				\end{cases}
			\end{equation*}
		\end{itemize}
		In each case, it is easy to check that together with $g(x)=f(x)$ for $x\in V(H_X)\setminus \{u,v\}$, such $g$ is a $(5, \phi)$-coloring of $\widehat{G}$ with $\phi(x)=6-d_G(x)$, a contradiction. 
	\end{proof}
	
	In the following claims, we aim to show that there are no consecutive $2$-vertices in $\widehat{G}$.
	
	\begin{figure}[htbp]
		\centering  
		\begin{subfigure}[t]{.45\textwidth}
			\centering
			\begin{tikzpicture}[>=latex,
				roundnode/.style={circle, draw=black!90, thick, minimum size=5mm, inner sep=0pt},
				squarenode/.style={rectangle, draw=black!90, thick, minimum size=5mm, inner sep=0pt},
				scale=0.7
				]
				\node [roundnode] (v) at (1.5,0) {$v$};
				\node [roundnode] (u) at (-1.5,0) {$u$};
				\node [roundnode] (w) at (0,0) {$w$};
				\node [squarenode] (y) at (3, 0) {$v'$};
				\node [squarenode] (x) at (-3, 0) {$u'$};
				
				\draw [line width =1pt, black] (y)--(v)--(w)--(u)--(x);
			\end{tikzpicture}
			\caption{$2$-vertex having two $2$-neighbors}  
			\label{fig:2-2-2}  
		\end{subfigure}
		\begin{subfigure}[t]{.45\textwidth}
			\centering
			\begin{tikzpicture}[>=latex,
				roundnode/.style={circle, draw=black!90, thick, minimum size=5mm, inner sep=0pt},
				squarenode/.style={rectangle, draw=black!90, thick, minimum size=5mm, inner sep=0pt},
				scale=0.7
				]
				\node [roundnode] (v) at (1,0) {$v$};
				\node [roundnode] (u) at (-1,0) {$u$};
				\node [squarenode] (y) at (3, 0) {$v'$};
				\node [squarenode] (x) at (-3, 0) {$u'$};
				
				\draw [line width =1pt, black] (x)--(u)--(v)--(y);
			\end{tikzpicture}
			\caption{Adjacent $2$-vertices}  
			\label{fig:2-2} 
		\end{subfigure}
		\caption{Consecutive $2$-vertices in \Cref{claim:2-2-2} and \Cref{claim:2-2}}
		\label{fig:2-vertices}
	\end{figure}

	\begin{claim}\label{claim:2-2-2}
		$\widehat{G}$ contains no $2$-vertex whose two neighbors are both $2$-vertices.
	\end{claim}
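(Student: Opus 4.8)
The plan is to argue by contradiction from the minimality of $\widehat G$, reducing by a \emph{smoothing} (degree-$2$ suppression) of the middle vertex. Suppose $w$ is a $2$-vertex whose two neighbours $u,v$ are also $2$-vertices, and let $u'$ (resp.\ $v'$) be the second neighbour of $u$ (resp.\ $v$), as in \Cref{fig:2-2-2}. I would first check that $u',u,v,v'$ are four distinct vertices. Only $u'\neq v'$ needs a word: if $u'=v'=:z$ then $u,w,v,z$ span a $4$-cycle in which $u,w,v$ all have degree $2$, so $z$ is either a cut-vertex of $\widehat G$ (impossible by \Cref{claim:2-connected}) or $\widehat G$ is exactly this $4$-cycle (impossible since $|V(\widehat G)|\geq 5$); the coincidences $u'=v$ and $v'=u$ are excluded the same way, by producing a triangle component.

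Next I would form $\widehat G':=\widehat G-w+uv$, giving the new edge $uv$ the sign $\sigma(uw)\sigma(wv)$ so that every cycle keeps its sign. Then $u,v$ have degree $2$ in $\widehat G'$, all other degrees are unchanged, and $|V(\widehat G')|=|V(\widehat G)|-1$; moreover $\widehat G'$ is simple (as $u,v$ were non-adjacent) and still $2$-connected, since suppressing a degree-$2$ vertex preserves $2$-connectivity. As $\widehat G'$ is $2$-connected it is its own unique block, so to apply minimality I only need $\widehat G'\not\cong C^*_3,C^*_4,K_4^\bullet,K_4$. The presence of the two degree-$2$ vertices $u,v$ already rules out $K_4$ (which has none) and $K_4^\bullet$ (which has exactly one); $C^*_3$ is ruled out by $|V(\widehat G')|\geq 4$; and $\widehat G'\cong C^*_4$ would force $|V(\widehat G)|=5$ with $\widehat G$ a $5$-cycle, contradicting that $\widehat G$ has a $3$-vertex. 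Hence minimality yields a $(5,\phi')$-coloring $f'$ of $\widehat G'$ with $\phi'=6-d_{G'}$, and $\phi'$ coincides with $\phi$ on every vertex of $\widehat G'$.

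Finally I would un-smooth, keeping $f:=f'$ on $V(\widehat G)\setminus\{w\}$ and choosing only $f(w)\in\binom{[5]}{\pm 4}$ so as to respect the edges $uw,wv$. Writing $F_u,F_v\subseteq\pm[5]$ for the sets of colours $w$ must avoid across these two edges ($F_u=f(u)$ or $-f(u)$ according to the sign of $uw$, and similarly $F_v$), the task is to find four colours of distinct absolute value outside $F_u\cup F_v$. The crux is that, because $\sigma(uv)=\sigma(uw)\sigma(wv)$, the constraint that $f'$ already satisfies on the edge $uv$ is exactly what forbids any value $k\in[5]$ from having both $+k$ and $-k$ in $F_u\cup F_v$: since neither $F_u$ nor $F_v$ contains a $\pm$-pair, such a doubly-blocked $k$ would have to be split between them, and a short case check on $(\sigma(uw),\sigma(wv))$ shows each such split is precisely the configuration excluded by the $uv$-constraint. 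With no value doubly blocked, $w$ can take an admissible sign on four of the five values, so $f$ is a $(5,\phi)$-coloring of $\widehat G$ — contradicting that $\widehat G$ is a counterexample. I expect this last step to be the main obstacle: without the sign bookkeeping inherited from the suppressed vertex, $F_u\cup F_v$ could block enough colours to leave $w$ uncolourable, and the entire reduction rests on the observation that $\sigma(uv)=\sigma(uw)\sigma(wv)$ rules that out.
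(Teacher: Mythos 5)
Your proof is correct, and it takes a genuinely different route from the paper's. The paper deletes all three $2$-vertices $\{u,w,v\}$ at once, which forces it to (i) invoke \Cref{claim:3OR4cycles} to rule out newly created $C^*_3$/$C^*_4$ blocks at $u'$ and $v'$, (ii) shrink the colorings of $u'$ and $v'$ via \Cref{obs:proper_subset} so that their available sets differ, and (iii) run an explicit case analysis on the signature to construct $g(u),g(v),g(w)$ by hand. You instead suppress only the middle vertex $w$, giving the new edge $uv$ the sign $\sigma(uw)\sigma(wv)$; this keeps the switching class of every cycle, keeps $\phi'=\phi$ on all surviving vertices (all relevant degrees are unchanged), and keeps the graph $2$-connected, so the block hypothesis of \Cref{thm:main} is verified by a one-line degree count rather than by \Cref{claim:3OR4cycles}. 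Your crux — that a value $k$ is doubly blocked at $w$ exactly when $F_u\cap(-F_v)\neq\emptyset$, and that $F_u\cap(-F_v)=(-\sigma(uw))f(u)\cap(\sigma(wv))f(v)$ is empty precisely because $f$ satisfies the constraint on the edge $uv$ of sign $\sigma(uw)\sigma(wv)$ — checks out in all four sign cases, and then $w$ has an admissible sign on all five absolute values, of which it needs only four. (This suffices because the paper's refined definition of $(p,\phi)$-coloring is purely edge-local.) What your approach buys is brevity and conceptual clarity: no signature case analysis, no recoloring of $u'$ and $v'$, and dependence only on \Cref{claim:2-connected} and the facts $|V(\widehat G)|\geq 5$ and that $\widehat G$ has a $3$-vertex. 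What the paper's heavier template buys is uniformity: the same delete-and-recolor scheme, with \Cref{obs:proper_subset}, is reused almost verbatim in \Cref{claim:2-2}, \Cref{claim:2-3-2}, and \Cref{claim:adjacentT}, where the neighbors have degree $3$ and a suppression/contraction would change the $\phi$-values in ways that no longer match, so your trick does not extend to those later configurations.
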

	
	\begin{proof}
		Assume to the contrary that $\widehat{G}$ contains a $2$-vertex $w$ which has two $2$-neighbors $u$ and $v$, whose neighbors are $u'$ and $v'$, respectively, as depicted in \Cref{fig:2-2-2}. As $\widehat{G}$ contains no block isomorphic to $C^*_4$, we have that  $u'\neq v'$. By \Cref{claim:2-connected}, $u'$ and $v'$ cannot be in the same $(2,3,3,3)$-cycle or the same $(3,3,3)$-triangle in $\widehat{G}$. By possible switching, we may assume that $u'u, uw, wv, vv'$ are all negative.
		
		Let $\widehat{G'}:=\widehat{G}-\{u,w,v\}$. Note that no block isomorphic to $C^*_3$ or $C^*_4$ is created in $\widehat{G}'$, as otherwise either $u'$ or $v'$ would be in one of $(2,3,3)$-triangle, $(2,2,3,3)$-cycle or $(2,3,2,3)$-cycle in $\widehat{G}$, a contradiction to \Cref{claim:3OR4cycles}. By minimality, $\widehat{G}'$ admits a $(5, \phi')$-coloring $f$ where $\phi'(x)=6-d_{G'}(x)$ for $x \in V(G')$. Since $d_G(u')=d_{G'}(u')+1$ and $d_G(v')=d_{G'}(v')+1$, by definition $f(u')\in {[5] \choose \pm (7-d_G(u'))}$ and $f(v')\in {[5] \choose \pm (7-d_G(v'))}$. By \Cref{obs:proper_subset}, we can choose two subsets $f'(v')\subsetneq f(v')$ and $f'(u')\subsetneq f(u')$ satisfying that 
		$f'(v')\in {[5] \choose \pm (6-d_G(v'))}$, $f'(u')\in {[5] \choose \pm (6-d_G(u'))}$, and $f'(v')^*\neq f'(u')^*$. Let $A_{f'}(u)=\pm[5]\setminus f'(u')$ and $A_{f'}(v)=\pm[5]\setminus f'(v')$. We may assume, without loss of generality, that $\{c_1,c_2,c_3,c_4,\pm c_5\}\subseteq A_{f'}(u)$ and $\{d_1,d_2,d_3,\pm d_4,d_5\}\subseteq A_{f'}(v)$, where $|c_i|=|d_i|=i$ for every $i\in[5]$. 
		We define a new mapping $g$ as follows: $$g(u)=\{c_1,c_2,c_4,d_5\},~g(v)=\{d_1,d_3,c_4,d_5\},~g(w)=\{-c_2,-d_3,-c_4,-d_5\},$$ $g(u')=f'(u')$, $g(v')=f'(v')$, and $g(x)=f(x)$ for $x\in V(G)\setminus \{u', v', u,v,w\}$. Noting that $g(u)\subset A_{f'}(u)$ and $g(v)\subset A_{f'}(v)$, it is easy to see that $g$ is a $(5, \phi)$-coloring of $\widehat{G}$ with $\phi(x)=6-d_{G}(x)$, a contradiction. 
	\end{proof}

	\begin{claim}\label{claim:2-2}
		$\widehat{G}$ contains no adjacent $2$-vertices.
	\end{claim}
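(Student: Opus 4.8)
The plan is to follow the template of \Cref{claim:2-2-2}: locate the two adjacent $2$-vertices, delete them, color the smaller graph by minimality, and then extend across the deleted pair. Suppose $u,v$ are adjacent $2$-vertices and let $u',v'$ be the other neighbours of $u$ and $v$ (see \Cref{fig:2-2}). By \Cref{claim:2-connected} we have $\delta(\widehat G)\ge 2$, and since \Cref{claim:2-2-2} forbids a $2$-vertex with two $2$-neighbours, both $u'$ and $v'$ must be $3$-vertices. If $u'=v'$, then $u'uv$ is a triangle with $d(u)=d(v)=2$, i.e.\ a $C^*_3$ block attached to the rest of $\widehat G$ only through the cut edge at $u'$, which is excluded; and if $u'v'\in E(\widehat G)$, then $u'uvv'$ is a $(2,2,3,3)$-cycle, contradicting \Cref{claim:3OR4cycles}. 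Hence $u'\ne v'$ and they are non-adjacent.

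I would then set $\widehat G':=\widehat G-\{u,v\}$, so that $d_{G'}(u')=d_{G'}(v')=2$ and every other degree is unchanged. The first point to verify is that deleting $u,v$ creates no block in $\mathcal B_0$. Since only $u'$ and $v'$ drop in degree, any newly created bad block must contain $u'$ or $v'$ as a degree-$2$ vertex; as both now have degree $2$ they cannot occupy a degree-$3$ position of a $K_4$ or $K_4^{\bullet}$, so the only possibilities are a created $C^*_3$ or $C^*_4$ (the observation recorded just before the claim). Restoring the edge $u'u$ (resp.\ $v'v$) turns such a block back into a configuration of $\widehat G$: a triangle or $4$-cycle through $u'$ becomes a $(2,3,3)$-triangle or a $(2,2,3,3)$- or $(2,3,2,3)$-cycle, each excluded by \Cref{claim:3OR4cycles}, or else a genuine $C^*_3$/$C^*_4$ block of $\widehat G$; the case where the created $4$-cycle runs through both $u'$ and $v'$ likewise produces a forbidden $(2,3,2,3)$-cycle in $\widehat G$. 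Working this out is the fiddly part, but each sub-case collapses to an earlier claim. With no bad block created, minimality gives a $(5,\phi')$-coloring $f$ of $\widehat G'$ with $\phi'(x)=6-d_{G'}(x)$; in particular $|f(u')|=|f(v')|=4$.

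For the extension I would first apply \Cref{obs:proper_subset} to shrink $f(u')$ and $f(v')$ to sets $f'(u'),f'(v')\in\binom{[5]}{\pm 3}$ with $f'(u')^*\ne f'(v')^*$; by \Cref{lem:InclusionColoring} this remains a valid coloring of $\widehat G'$ and it lowers the demand at the now degree-$3$ vertices $u',v'$ to $3$. After a switching at $u$ and $v$ I may assume the edges $u'u$ and $vv'$ are negative, so that it remains to pick $f(u)\subseteq A(u):=\pm[5]\setminus f'(u')$ and $f(v)\subseteq A(v):=\pm[5]\setminus f'(v')$, each a $4$-set of distinct absolute values, respecting the constraint imposed by $uv$.

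I expect this final step to be the main obstacle, and it is exactly where the condition $f'(u')^*\ne f'(v')^*$ is indispensable. When $uv$ is negative one needs $f(u)\cap f(v)=\emptyset$; an absolute value lying in both $f'(u')^*$ and $f'(v')^*$ is available to $u$ and to $v$ in a single sign each, so if those two sets coincided the three shared absolute values would be \emph{exclusive} and could fill only $2\cdot 2+3\cdot 1=7$ of the $8$ required color-slots, a genuine obstruction. Once $f'(u')^*\ne f'(v')^*$, at most two absolute values are exclusive, leaving capacity $\ge 8$, and one checks that an explicit disjoint choice of $f(u),f(v)$ exists. The positive case imposes only $-f(u)\cap f(v)=\emptyset$ and is satisfied with room to spare. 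Assigning these sets and leaving $f$ unchanged elsewhere produces a $(5,\phi)$-coloring of $\widehat G$, contradicting the minimality of $\widehat G$.
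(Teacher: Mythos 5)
Your overall strategy coincides with the paper's: delete $\{u,v\}$, apply minimality, shrink $f(u')$ and $f(v')$ via \Cref{obs:proper_subset} to $3$-sets with distinct absolute-value sets, and extend across the pair. Your capacity-counting argument for the extension is sound, and you correctly isolate why $f'(u')^*\neq f'(v')^*$ is indispensable (the paper instead exhibits explicit sets, $g(u)=\{c_1,c_3,-d_4,c_5\}$ and $g(v)=\{d_2,-c_3,d_4,-c_5\}$; also, your remark that the positive-$uv$ case holds ``with room to spare'' is imprecise --- it is the mirror image of the negative case, obtained by replacing $f(u)$ with $-f(u)$, with the same tight count --- but the conclusion is correct). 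However, there is a genuine gap in your verification that deleting $u,v$ creates no bad block.

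The problematic case is a newly created $C^*_4$ of $\widehat{G}'$ passing through both $u'$ and $v'$. Since $u'\not\sim v'$, they occupy opposite positions of that $4$-cycle, say the cycle is $u'av'b$ with $d_{G'}(u')=d_{G'}(v')=d_{G'}(a)=2$ and $d_{G'}(b)\leq 3$. You assert this ``likewise produces a forbidden $(2,3,2,3)$-cycle in $\widehat{G}$,'' but that is only true when $d_G(b)=2$. When $d_G(b)=3$, the cycle $u'av'b$ is a $(2,3,3,3)$-cycle in $\widehat{G}$, which \Cref{claim:3OR4cycles} does \emph{not} forbid, so this sub-case cannot be dispatched by that claim at all. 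The correct argument is different in kind: in this configuration every vertex of $\{u,v,u',v',a\}$ has all of its edges inside the configuration, so $b$ would be a cut vertex of $G$, contradicting \Cref{claim:2-connected}. This is exactly why the paper records, before building the coloring, that $u'$ and $v'$ cannot lie on a common $(2,3,3,3)$-cycle or $(3,3,3)$-triangle --- a fact it derives from $2$-connectivity, not from the forbidden-cycle claim. A similar connectivity remark is also needed to rule out a new $K_4^{\bullet}$ block having $u'$ or $v'$ as its degree-$2$ (subdivision) vertex, a case your ``cannot occupy a degree-$3$ position'' argument does not cover. With these repairs your proof goes through.
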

	
	\begin{proof}
		Assume to the contrary that $\widehat{G}$ contains two adjacent $2$-vertices $u$ and $v$, whose neighbors are $u'$ and $v'$, respectively, as depicted in \Cref{fig:2-2}. By \Cref{claim:2-2-2}, $d_G(u')=d_G(v')=3$. As $\widehat{G}$ contains no block isomorphic to $C^*_3$, we know that $u'\neq v'$. Similar as before, $u'$ and $v'$ cannot be in the same $(2,3,3,3)$-cycle or the same $(3,3,3)$-triangle in $\widehat{G}$. By possibly a switching, we may assume that $u'u, uv, vv'$ are all negative. 
		
		Let $\widehat{G'}:=\widehat{G}-\{u,v\}$. Using similar argument in the proof of ~\Cref{claim:2-2-2},    
		no block isomorphic to $C^*_3$ or $C^*_4$ is created in $\widehat{G'}$.
		By minimality, there exists a $(5, \phi')$-coloring $f$ of $\widehat{G'}$ where $\phi'(x)=6-d_{G'}(x)$ for $x \in V(G')$. By the definition $f(u')\in {[5] \choose \pm 4}$ and $f(v')\in {[5] \choose \pm 4}$. By \Cref{obs:proper_subset}, we can find $f'(u')\subsetneq f(u')$ and $f'(v')\subsetneq f(v')$ with $f'(u')\in {[5] \choose \pm 3}$ and $f'(v')\in {[5] \choose \pm 3}$ such that $f'(u')\neq f'(v')$. Without loss of generality, assume that $A_{f'}(u):=\pm [5]\setminus f'(u')=\{c_1,c_2,c_3,\pm c_4,\pm c_5\}$ and $A_{f'}(v):=\pm [5]\setminus f'(v')=\{d_1,d_2,\pm d_3,d_4,\pm d_5\}$ where $|c_i|=|d_i|=i$ for every $i\in[5]$.  
		We define a mapping $g(x)\in {[5] \choose \pm (6-d_G(x))}$ for each $x\in V(G)$ as follows: 
		$$g(u)=\{c_1,c_3,-d_4,c_5\},~ g(v)=\{d_2,-c_3,d_4,-c_5\},$$ 
		
		$g(u')=f'(u')$, $g(v')=f'(v')$, and $g(x)=f(x)$ for $x\in V(\widehat{G})\setminus \{u,v,u',v'\}$. Noting that $g(u)\subset A_{f'}(u)$ and $g(v)\subset A_{f'}(v)$, one can easily check that $g$ is a $(5, \phi)$-coloring of $\widehat{G}$ where $\phi(x)=6-d_{G}(x)$, a contradiction. 
	\end{proof}
	
	Next, we consider $3$-vertices with neighbors of degree $2$ in $\widehat{G}$.
	
	\begin{claim}\label{claim:2-3-2}
		$\widehat{G}$ contains no $3$-vertex with two $2$-neighbors.
	\end{claim}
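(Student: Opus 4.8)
The plan is to derive a contradiction from the minimality of $\widehat{G}$. Suppose $w$ is a $3$-vertex whose neighbours $u,v$ are both $2$-vertices, let $t$ be the third neighbour of $w$, and let $u',v'$ be the second neighbours of $u,v$ respectively. First I would establish the distinctness of these vertices. Since $\widehat{G}$ is $2$-connected (\Cref{claim:2-connected}) and avoids the configurations of \Cref{claim:3OR4cycles}, we have $u'\neq v'$: otherwise $u'uwv$ is a $(2,3,2,3)$-cycle (if $d_G(u')=3$) or $u'$ is a $2$-vertex with two $2$-neighbours (if $d_G(u')=2$), contradicting \Cref{claim:3OR4cycles} or \Cref{claim:2-2-2}. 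Likewise $t\notin\{u',v'\}$, since $t=u'$ would make $uwu'$ a $(2,3,3)$-triangle. By \Cref{claim:2-2}, $u'$ and $v'$ are $3$-vertices. After a suitable switching I may assume that $u'u,uw,wv,vv'$ are all negative.

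The reduction I would use is to delete only the two $2$-vertices, setting $\widehat{G}'=\widehat{G}-\{u,v\}$. This turns $w$ into a pendant vertex and lowers $d_G(u'),d_G(v')$ to $2$, but leaves $t$ and every other vertex untouched; keeping $w$ in the graph is exactly what prevents $t$ from dropping degree, and is why I delete $\{u,v\}$ rather than $\{u,v,w\}$. Exactly as in the proofs of \Cref{claim:2-2-2} and \Cref{claim:2-2}, no block of $\widehat{G}'$ is isomorphic to a member of $\mathcal{B}_0$: a newly created $C^*_3$ or $C^*_4$ would use $u'$ or $v'$ as one of its $2$-vertices and thereby exhibit a $(2,3,3)$-triangle, $(2,2,3,3)$-cycle or $(2,3,2,3)$-cycle in $\widehat{G}$, contradicting \Cref{claim:3OR4cycles}; the pendant vertex $w$ lies in no such block, and vertex deletion cannot create a $K_4$ or $K_4^{\bullet}$ subgraph. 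Hence by minimality $\widehat{G}'$ has a $(5,\phi')$-coloring $f$ with $\phi'(x)=6-d_{G'}(x)$, and I set $g(x)=f(x)$ for all $x\notin\{u,v,w,u',v'\}$, in particular $g(t)=f(t)$.

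It remains to (re)color $u',v',w,u,v$. Since $d_G(u')=d_G(v')=3$, I first apply \Cref{obs:proper_subset} to choose $3$-element subsets $g(u')\subsetneq f(u')$ and $g(v')\subsetneq f(v')$ in $\binom{[5]}{\pm 3}$ with $g(u')^*\neq g(v')^*$; restricting preserves all edges at $u',v'$ that lie inside $\widehat{G}'$. This enlarges the available colour sets $A(u):=\pm[5]\setminus g(u')$ and $A(v):=\pm[5]\setminus g(v')$ to $7$ colours each, with two values occurring in both signs, while $A(w):=\pm[5]\setminus f(t)$ (adjusted for the sign of $wt$) has at least $6$ colours and meets every value in $[5]$. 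I then must select $g(w)\in\binom{[5]}{\pm 3}$ with $g(w)\subseteq A(w)$, and $g(u),g(v)\in\binom{[5]}{\pm 4}$ with $g(u)\subseteq A(u)$, $g(v)\subseteq A(v)$, such that $g(w)$ is disjoint from each of $g(u),g(v)$ (there is no edge $uv$, so $g(u)$ and $g(v)$ may overlap). The main obstacle is this simultaneous choice, and I would settle it by a counting argument: a value of $[5]$ can fail to be usable for $g(u)$ only if it is one of the three single-signed values of $A(u)$ and $g(w)$ already occupies that colour, so because $A(u)$ has two doubly available values one can choose the three colours of $g(w)$ so that at most one value is lost on the $u$-side and at most one on the $v$-side, leaving four usable values for each of $g(u)$ and $g(v)$. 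In the write-up I would, as in \Cref{claim:2-2-2}, fix explicit representatives $c_i,d_i$ with $|c_i|=|d_i|=i$ for these available sets and exhibit explicit colour sets for $u,v,w$, the final verification being routine.
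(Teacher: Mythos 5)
Your reduction $\widehat{G}'=\widehat{G}-\{u,v\}$ has a genuine gap at the step where you assert that no block of $\widehat{G}'$ belongs to $\mathcal{B}_0$. Your argument only covers the case where a newly created $C^*_3$ or $C^*_4$ contains exactly one of $u',v'$ as a vertex of reduced degree; it fails when the new block contains \emph{both} of them. Concretely, suppose $u'$ and $v'$ are adjacent and share a common neighbour $y$ with $d_G(u')=d_G(v')=d_G(y)=3$. This $(3,3,3)$-triangle is not excluded by \Cref{claim:3OR4cycles}, nor by any earlier claim, and it is compatible with \Cref{claim:2-connected}: in the analogous situations in \Cref{claim:2-2-2} and \Cref{claim:2-2} one can rule out $u',v'$ lying in a common $(3,3,3)$-triangle, because there the set $\{u,w,v,u',v'\}$ (resp.\ $\{u,v,u',v'\}$) would attach to the rest of the graph only through $y$, making $y$ a cut vertex; but in the present configuration $w$ has the third edge $wt$, which supplies the second connection to the rest of the graph, so no contradiction with $2$-connectivity arises. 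In $\widehat{G}-\{u,v\}$ the triangle $u'v'y$ then becomes a block isomorphic to $C^*_3$ (a $4$-cycle through both $u'$ and $v'$ can likewise become a $C^*_4$), so the minimality of $\widehat{G}$ cannot be invoked, and your induction breaks. This is precisely why the paper deletes the $3$-vertex $w$ rather than $\{u,v\}$: in $\widehat{G}-w$ the vertices $u,v$ become pendant, hence lie in no cycle and in no block of $\mathcal{B}_0$, and the only non-pendant vertex of reduced degree is the third neighbour of $w$; a new $C^*_3$ or $C^*_4$ block through that single vertex does force a $(2,3,3)$-triangle, a $(2,2,3,3)$- or $(2,3,2,3)$-cycle, or adjacent $2$-vertices in $\widehat{G}$, contradicting \Cref{claim:3OR4cycles} or \Cref{claim:2-2}.

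There is also a secondary hole in your final counting argument, independent of the above. When $d_G(t)=2$ the set $A(w)$ has exactly $6$ colours, and for fixed choices of $g(u'),g(v')$ it may be impossible to pick $g(w)$ losing at most one value on each side: take $f(t)=\{1,2,3,4\}$, so $A(w)=\{-1,-2,-3,-4,5,-5\}$, together with $g(u')=\{1,2,5\}$ and $g(v')=\{3,4,-5\}$ (note $g(u')^*\neq g(v')^*$). Then the colours killing singly-signed values of $A(u)$ are $\{-1,-2,-5\}$ and those killing singly-signed values of $A(v)$ are $\{-3,-4,5\}$, so \emph{every} colour of $A(w)$ is bad for one of the two sides; any three colours of distinct absolute values therefore contain two that are bad for the same side by pigeonhole, leaving only three usable values there, while $g(u)$ and $g(v)$ each need four. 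This particular obstruction might be repairable by coordinating the choice of the subsets $g(u')\subsetneq f(u')$ and $g(v')\subsetneq f(v')$ with the choice of $g(w)$, but as written the ``at most one value lost per side'' claim is not established. The block issue above, however, is the essential flaw: it invalidates the appeal to minimality itself, not merely the colour bookkeeping.
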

	
	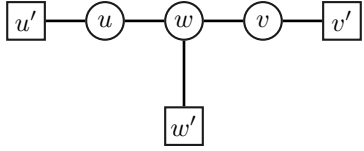
\begin{figure}[htbp]
		\centering
		\begin{tikzpicture}[>=latex,
			roundnode/.style={circle, draw=black!90, thick, minimum size=5mm, inner sep=0pt},
			squarenode/.style={rectangle, draw=black!90, thick, minimum size=5mm, inner sep=0pt},
			scale=0.7
			]
			\node [roundnode] (v) at (1.5,0) {$v$};
			\node [roundnode] (u) at (-1.5,0) {$u$};
			\node [roundnode] (w) at (0,0) {$w$};
			\node [squarenode] (w') at (0,-2) {$w'$};
			\node [squarenode] (y) at (3, 0) {$v'$};
			\node [squarenode] (x) at (-3, 0) {$u'$};
			
			\draw [line width =1pt, black] (y)--(v)--(w)--(u)--(x);
			\draw [line width =1pt, black] (w')--(w);
		\end{tikzpicture}
		\caption{A $3$-vertex having two $2$-neighbors}  
		\label{fig:2-3-2}  
	\end{figure}

	\begin{proof}
		Let $w$ be such a $3$-vertex in $\widehat{G}$ with two $2$-neighbors $u$ and $v$ with $N_G(u)=\{w,u'\}$, $N_G(v)=\{w,v'\}$, and $N_G(w)=\{u,v,w'\}$. See \Cref{fig:2-3-2}. By~\Cref{claim:3OR4cycles}, there is neither a $(2,3,3)$-triangle nor a $(2,3,2,3)$-cycle, so $w',u',$ and $v'$ are three distinct vertices. Moreover, by~\Cref{claim:2-2}, $d_G(u')=d_G(v')=3$. By switching, if needed, we may assume that $uw$, $vw$, and $ww'$ are all negative. 
		
		Let $\widehat{G'}:=\widehat{G}-\{w\}$. By \Cref{claim:3OR4cycles}, no block isomorphic to $C^*_3$ or $C^*_4$ is created in $\widehat{G'}$. By minimality, there exists a $(5, \phi')$-coloring $f$ of $\widehat{G'}$ where $\phi'(x)=6-d_{G'}(x)$ for $x \in V(G')$. As $d_{G'}(u)=d_{G'}(v)=1$ and each of them has a neighbor of degree $3$ in $\widehat{G'}$, by the definition of $f$, $|A_f(v)|=|A_f(u)|=7$ and $|A_f(v)^*|=|A_f(u)^*|=5$. Without loss of generality, we may assume that $A_f(v)=\{c_1,c_2,c_3,\pm 4, \pm 5\}$ and $A_f(u)=\{d_1,d_2,d_3,d_4,d_5, d^1_u, d^2_u\}$ where $|c_i|=|d_i|=i$ for every $i\in[5]$ and $d^j_u\in \pm[5]\setminus \{d_1,d_2,d_3,d_4,d_5\}$ for $j\in [2]$. Observing that $1\leq d_{G'}(w')\leq 2$, we consider the following cases based on the degree of $w'$. Let $a_i\in \pm [5]$ such that $|a_i|=i$ for $i\in [5]$.
		
		\medskip
		Case~(1) Assume that $d_G(w')=2$. In this case, we can assume $f(w')=\{a_1,a_2,a_3,a_4,a_5\}$. We consider two subcases:
		
		(1.1) If one of $|d^j_u|\in \{1,2,3\}$, say $d^j_u=-d_3$ (i.e., $\pm 3\in f(u)$), then we define $g(v)=\{c_1,c_2,d_4,a_5\}$, $g(u)=\{d_1,d_2,a_3,d_4\}$, $g(w')=\{a_1,a_2,a_3,a_5\}$ and $g(w)=\{-a_3,-d_4,-a_5\}$.
		
		(1.2) Otherwise, $f(u)=\{d_1,d_2,d_3,\pm 4, \pm 5\}$. Now we define $g(v)=\{c_1,c_2,a_4,a_5\}$, $g(u)=\{d_1,d_2,a_4,a_5\}$, $g(w')=\{a_1,a_2,a_4,a_5\}$ and $g(w)=\{a_3,-a_4,-a_5\}$.
		
		\medskip
		Case~(2) Assume that $d_G(w')=3$. In this case, $f(w')\in {[5] \choose \pm 4}$. As we fix $A_f(v)=\{c_1,c_2,c_3,\pm 4, \pm 5\}$, by symmetry there are only two possibilities for $f(w')$.
		
		(2.1) If $f(w')=\{a_1,a_2,a_3,a_4\}$, then define $g(v)=\{c_1,c_3,a_4,d_5\}$, $g(u)=\{d_1,d_2,d_3,d_5\}$, $g(w')=\{a_1,a_3,a_4\}$ and $g(w)=\{-d_2,-a_4,-d_5\}$. 
		
		(2.2) If $f(w')=\{a_1,a_2,a_4,a_5\}$, then define $g(v)=\{c_1,c_2,a_4,d_5\}$, $g(u)=\{d_1,d_2,d_3,d_5\}$, $g(w')=\{a_1,a_2,a_4\}$ and $g(w)=\{-d_3,-a_4,-d_5\}$. 
		
		\medskip
		In each case, let $g(x)=f(x)$ for $x\in V(\widehat{G})\setminus \{u,w,v,w'\}$. Note that $g(u)\subset f(u)$, $g(v)\subset f(v)$, and $g(w')\subset f(w')$, and, moreover, $g(w)\cap g(y)=\emptyset$ for each $y\in \{u,v,w'\}$. Thus, $g$ is a $(5, \phi)$-coloring of $\widehat{G}$ where $\phi(x)=6-d_{G}(x)$, a contradiction.
	\end{proof}

	We now consider triangles or $4$-cycles sharing edges in $\widehat{G}$. 
	
	\begin{figure}[htbp]
		\centering  
		\begin{subfigure}[t]{.32\textwidth}
			\centering
			\begin{tikzpicture}[>=latex,
				roundnode/.style={circle, draw=black!90, thick, minimum size=5mm, inner sep=0pt},
				squarenode/.style={rectangle, draw=black!90, thick, minimum size=5mm, inner sep=0pt},
				scale=0.7
				]
				\node [roundnode] (v) at (1.5,0) {$v$};
				\node [roundnode] (u) at (-1.5,0) {$u$};
				\node [roundnode] (w1) at (0,1.5) {$w_1$};
				\node [roundnode] (w2) at (0,-1.5) {$w_2$};
				\node [squarenode] (y) at (3, 0) {$v'$};
				\node [squarenode] (x) at (-3, 0) {$u'$};
				
				\draw [line width =1pt, black] (x)--(u)--(w1)--(v);
				\draw [line width =1pt, black] (u)--(w2)--(v)--(y);
				\draw [line width =1pt, black] (w1)--(w2);
			\end{tikzpicture}
			\caption{Adjacent triangles}  
			\label{fig:adj-triangles}  
		\end{subfigure}
		\begin{subfigure}[t]{.32\textwidth}
			\centering
			\begin{tikzpicture}[>=latex,
				roundnode/.style={circle, draw=black!90, thick, minimum size=5mm, inner sep=0pt},
				squarenode/.style={rectangle, draw=black!90, thick, minimum size=5mm, inner sep=0pt},
				scale=0.7
				]
				\node [roundnode] (v) at (1.5,-1.5) {$v$};
				\node [roundnode] (w) at (1.5,1.5) {$w$};
				\node [roundnode] (u) at (-1.5,0) {$u$};
				\node [roundnode] (w1) at (0,1.5) {$w_1$};
				\node [roundnode] (w2) at (0,-1.5) {$w_2$};
				\node [squarenode] (y) at (3,-1.5) {$v'$};
				\node [squarenode] (x) at (-3, 0) {$u'$};
				
				\draw [line width =1pt, black] (x)--(u)--(w1)--(w2);
				\draw [line width =1pt, black] (u)--(w2)--(v)--(y);
				\draw [line width =1pt, black] (w1)--(w)--(v);
			\end{tikzpicture}
			\caption{A triangle adjacent to a $(2,3,3,3)$-cycle}  
			\label{fig:triangle-2333-cycle} 
		\end{subfigure}
		\begin{subfigure}[t]{.32\textwidth}
			\centering
			\begin{tikzpicture}[>=latex,
				roundnode/.style={circle, draw=black!90, thick, minimum size=5mm, inner sep=0pt},
				squarenode/.style={rectangle, draw=black!90, thick, minimum size=5mm, inner sep=0pt},
				scale=0.7
				]
				\node [roundnode] (v) at (1.5,0) {$v$};
				\node [roundnode] (u) at (-1.5,0) {$u$};
				\node [roundnode] (w1) at (0,1.5) {$w_1$};
				\node [roundnode] (w2) at (0,-1.5) {$w_2$};
				\node [roundnode] (w) at (0,0) {$w$};
				\node [squarenode] (y) at (3, 0) {$v'$};
				\node [squarenode] (x) at (-3, 0) {$u'$};
				
				\draw [line width =1pt, black] (x)--(u)--(w1)--(v);
				\draw [line width =1pt, black] (u)--(w2)--(v)--(y);
				\draw [line width =1pt, black] (w1)--(w)--(w2);
			\end{tikzpicture}
			\caption{$(2,3,3,3)$-cycles sharing a $2$-path}  
			\label{fig:two-2333-cycle} 
		\end{subfigure}
		\caption{Configurations in \Cref{claim:adjacentT}}
		\label{fig:adjacent-cycles}
	\end{figure}

	\begin{claim}\label{claim:adjacentT}
		$\widehat{G}$ contains none of the following: adjacent triangles, a triangle adjacent to a $(2,3,3,3)$-cycle, or two $(2,3,3,3)$-cycles sharing a $2$-path.
	\end{claim}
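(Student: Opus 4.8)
The plan is to reuse the reduction-and-extension scheme of the preceding claims. Suppose for contradiction that $\widehat{G}$ contains one of the three configurations of \Cref{fig:adjacent-cycles}, and in each case let $u,v$ be the two $3$-vertices whose remaining neighbours $u',v'$ lie outside the configuration. The interior consists of $\{w_1,w_2\}$ in \Cref{fig:adj-triangles} and of $\{w_1,w_2,w\}$ in \Cref{fig:triangle-2333-cycle} and \Cref{fig:two-2333-cycle}; in every case each interior vertex has all of its neighbours inside the configuration, so deleting the interior lowers only $d_G(u)$ and $d_G(v)$ (each to $1$) and leaves $d_G(u'),d_G(v')$ untouched. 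After a switching I would normalise the signs along a spanning tree of the configuration together with $uu'$ and $vv'$, so that only one or two non-tree edges (for instance $uv$ in \Cref{fig:adj-triangles}, or $w_2v$ in the two $4$-cycle configurations) carry a sign that has to be split into cases.

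Let $\widehat{G'}$ be obtained from $\widehat{G}$ by deleting the interior. First I would verify that $\widehat{G'}$ contains no block in $\mathcal{B}_0$: no new $K_4$ or $K_4^\bullet$ can appear since deletion creates no edges, and a newly created $C^*_3$ or $C^*_4$ would place $u'$ or $v'$ into a $(2,3,3)$-triangle, a $(2,2,3,3)$- or $(2,3,2,3)$-cycle, or make some vertex a $3$-vertex with two $2$-neighbours, contradicting \Cref{claim:3OR4cycles} or \Cref{claim:2-3-2}; the use of \Cref{claim:2-connected} guarantees $u'\neq v'$ in the generic situation, and the few degenerate cases (where $u',v'$ coincide or have degree $2$) are small enough to colour directly. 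Minimality of $\widehat{G}$ then furnishes a $(5,\phi')$-coloring $f$ of $\widehat{G'}$ with $\phi'(x)=6-d_{G'}(x)$. Because $u,v$ are now $1$-vertices, their available sets $A_f(u),A_f(v)$ are large and satisfy $A_f(u)^*=A_f(v)^*=[5]$, so I may fix representatives $c_i\in A_f(u)$ and $d_i\in A_f(v)$ with $|c_i|=|d_i|=i$ for each $i\in[5]$.

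The final step is to choose $3$-element sets $g(u)\subseteq f(u)$ and $g(v)\subseteq f(v)$ and then colour the interior so that every colour class remains balanced. In \Cref{fig:adj-triangles} the induced subgraph on $\{u,v,w_1,w_2\}$ is a signed $K_4-uv$, so I must produce $g(w_1),g(w_2)\in\binom{[5]}{\pm3}$ each avoiding the appropriate signed copies of $g(u),g(v)$ and of one another; in \Cref{fig:triangle-2333-cycle} and \Cref{fig:two-2333-cycle} I would in addition assign the $4$-set $g(w)\in\binom{[5]}{\pm4}$ to the $2$-vertex $w$. Exhibiting these sets, case by case on the residual sign pattern of the one or two non-normalised edges, is where I expect the real work to lie: each interior $3$-vertex is adjacent to three other configuration vertices, so its list is tight, and a single off choice on $\{u,v\}$ can make the interior uncolourable. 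As in the earlier claims, matching colours by their absolute values $1,\dots,5$ and exploiting the doubled absolute values present in each available set should let me write down a uniform assignment in every subcase, producing a $(5,\phi)$-coloring of $\widehat{G}$ with $\phi(x)=6-d_G(x)$ and hence the desired contradiction.
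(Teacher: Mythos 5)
Your reduction deletes only the interior vertices $\{w_1,w_2\}$ (resp.\ $\{w_1,w_2,w\}$) and keeps $u,v$ as $1$-vertices, so $u'$ and $v'$ retain their full degree in $\widehat{G'}$ and hence receive color sets of exactly the size $6-d_G(u')$, $6-d_G(v')$ that they already need in $\widehat{G}$. This is the gap: you then have no control over $f(u')$ and $f(v')$, and there are inductive colorings $f$ for which no completion exists. Concretely, take the adjacent-triangles configuration of \Cref{fig:adj-triangles} with all five edges $uw_1,uw_2,vw_1,vw_2,w_1w_2$ negative (a legitimate switching class), and suppose $d_G(u')=d_G(v')=2$ (allowed at this point: $2$-vertices are only eliminated in \Cref{claim:2-3}, which relies on the present claim), with $f(u')=\{1,2,3,4\}$ and $f(v')=\{-1,-2,-3,-4\}$. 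Since $u'$ needs all four of its colors in $\widehat{G}$, you are forced to keep $g(u')=f(u')$ and $g(v')=f(v')$, hence $g(u)\subseteq A_f(u)=\{-1,-2,-3,-4,5,-5\}$ and $g(v)\subseteq A_f(v)=\{1,2,3,4,5,-5\}$. The negative edges force $g(w_1)$ and $g(w_2)$ to be disjoint from each other and from $g(u)\cup g(v)$, so $|g(u)\cup g(v)|\le 10-6=4$, i.e.\ $|g(u)\cap g(v)|\ge 2$. But $g(u)\cap g(v)\subseteq A_f(u)\cap A_f(v)=\{5,-5\}$, and $g(u)$ may contain at most one of $\pm 5$; hence $|g(u)\cap g(v)|\le 1$, a contradiction. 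So with your choice of $\widehat{G'}$ the extension step cannot be carried out ``in every subcase'', no matter how cleverly the sets are chosen.

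The paper avoids exactly this trap by deleting the whole configuration, $X_1=\{u,v,w_1,w_2\}$ (resp.\ $X_2=X_3=\{u,v,w,w_1,w_2\}$). Then $d_{G'}(u')=d_G(u')-1$, so the inductive coloring hands $u'$ and $v'$ one spare color each, and \Cref{obs:proper_subset} lets one discard a color from each so that the retained sets $f'(u')$, $f'(v')$ have distinct absolute-value footprints. After renaming colors this yields $\{c_1,c_2,c_3,c_4,\pm c_5\}\subseteq A_{f'}(u)$ and $\{d_1,d_2,d_3,\pm d_4,d_5\}\subseteq A_{f'}(v)$ with the doubled absolute values in \emph{different} positions, which is precisely what allows $g(u)$ and $g(v)$ to share two colors (one of absolute value $4$, one of absolute value $5$), e.g.\ $g(u)=\{c_1,c_4,d_5\}$ and $g(v)=\{d_1,c_4,d_5\}$, leaving six colors free for $w_1,w_2$ (and $w$). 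Your intuition about exploiting doubled absolute values is the right one, but that structure must be engineered by the reduction; it is not automatic. Two smaller slips: with your deletion, a newly created bad block could not involve $u'$ or $v'$ at all (only $u,v$ lose degree, and they become $1$-vertices, so the correct justification is $2$-connectivity of $\widehat{G}$, not \Cref{claim:3OR4cycles}); and the case $d_G(u')=2$ is not a ``degenerate case that can be coloured directly'' --- it is exactly the case where your scheme breaks.
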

	
	\begin{proof}
		Assume to the contrary that $\widehat{G}$ contains one of the said structures. We use the labeling of~\Cref{fig:adj-triangles}, \Cref{fig:triangle-2333-cycle} and \Cref{fig:two-2333-cycle}. In each case, since $\widehat{G}$ has at least 5 vertices, $\widehat{G}$ has no cut-vertex and so $u'\neq v'$. Moreover, observe that $u'$ and $v'$ cannot be in the same $(2,3,3,3)$-cycle or the same $(3,3,3)$-triangle in $\widehat{G}$. 
		
		Let $X_1=\{u,v,w_1,w_2\}$ and $X_2=X_3=\{u,v,w,w_1,w_2\}$. Let $\widehat{G'_i}:=\widehat{G}-X_i$ for $i\in [3]$. By~\Cref{claim:3OR4cycles} and the above observation, $\widehat{G'}_i$ contains no block isomorphic to $C^*_3$ or $C^*_4$. By minimality, there exists a $(5, \phi')$-coloring $f$ of $\widehat{G'_i}$ where $\phi'(x)=6-d_{G'_i}(x)$ for $x \in V(G'_i)$. Since $d_G(u')=d_{G'_i}(u')+1$ and $d_G(v')=d_{G'_i}(v')+1$, by definition $f(u')\in {[5] \choose \pm (7-d_G(u'))}$ and $f(v')\in {[5] \choose \pm (7-d_G(v'))}$. By \Cref{obs:proper_subset}, we can choose two subsets $f'(v')\subsetneq f(v')$ and $f'(u')\subsetneq f(u')$ satisfying that 
		$f'(v')\in {[5] \choose \pm (6-d_G(v'))}$, $f'(u')\in {[5] \choose \pm (6-d_G(u'))}$, and $f'(v')\neq f'(u')$. Let $A_{f'}(u)=\pm[5]\setminus f'(u')$ and $A_{f'}(v)=\pm[5]\setminus f'(v')$. Without loss of generality, assume that $\{c_1,c_2,c_3,c_4,\pm c_5\}\subseteq A_{f'}(u)$ and $\{d_1,d_2,d_3,\pm d_4,d_5\}\subseteq A_{f'}(v)$, where $|c_i|=|d_i|=i$ for every $i\in[5]$.
		
		\bigskip
		We first consider $\widehat{G'_1}$ and define an appropriate mapping $g(x)\in {[5] \choose \pm (6-d_G(x))}$ for $x\in X_1$ such that $g(u)\subset A_{f'}(u)$ and $g(v)\subset A_{f'}(v)$. By possibly a switching, we may assume that $uw_1,uw_2$, and $vw_1$ are negative. 
		Based on the signature of two adjacent triangles, and by symmetry we have three cases to discuss. 
		
		\medskip
		(1) If $w_1w_2$ is negative and $vw_2$ is positive, then let $g(u)=\{c_1,c_4,d_5\}$, $g(v)=\{d_1,-c_4,d_5\}$, $g(w_1)=\{-c_2,-c_3, -d_5\}$, and $g(w_2)=\{c_2,c_3,-c_4\}$.
		
		\medskip
		(2) If $w_1w_2$ and $vw_2$ are both negative, then let $g(u)=\{c_1,c_4,d_5\}$, $g(v)=\{d_1,c_4,d_5\}$, $g(w_1)=\{-c_2,-c_3,-d_5\}$ and $g(w_2)=\{c_2,c_3,-c_4\}$. 
		
		\medskip
		(3) If $w_1w_2$ is positive and $vw_2$ is negative, then let $g(u)=\{c_1,c_4,d_5\}$, $g(v)=\{d_1,c_4,d_5\}$, and $g(w_1)=g(w_2)=\{-c_2,-c_3,-d_5\}$.

		\bigskip
		We next consider $\widehat{G'_2}$ and define an appropriate mapping $g(x)\in {[5] \choose \pm (6-d_G(x))}$ for $x\in X_2$ such that $g(u)\subset A_{f'}(u)$ and $g(v)\subset A_{f'}(v)$. By possibly a switching, we may assume that $uw_1, ww_1$, and $vw_2$ are negative.
		Based on the signatures of the triangle and the $4$-cycle, noting that $\{uw_2, w_1w_2, wv\}$ is contained in an edge-cut, by possible switching we have four cases. 
		
		\medskip
		(1) If $uw_2, w_1w_2, wv$ are all negative, then we define $g(u)=\{c_1,c_4,d_5\}$, $g(v)=\{d_1,c_4,d_5\}$, $g(w_1)=\{-c_2,-c_3,-d_5\}$, $g(w_2)=\{c_2,c_3,-c_4\}$, and $g(w)=\{-d_1,c_2,c_3,-c_4\}$.

		\medskip
		(2) If $uw_2, w_1w_2$ are negative and $wv$ is positive, then let $g(u)=\{c_1,c_4,d_5\}$, $g(v)=\{d_1,c_4,d_5\}$, $g(w_1)=\{-c_2,-c_3,-d_5\}$, $g(w_2)=\{c_2,c_3,-c_4\}$, and $g(w)=\{d_1,c_2,c_3,d_5\}$.  
		
		\medskip
		(3) If $uw_2, wv$ are negative and $w_1w_2$ is positive, then let $g(u)=\{c_1,c_4,d_5\}$, $g(v)=\{d_1,c_4,d_5\}$, $g(w_1)=g(w_2)=\{-c_2,-c_3,-d_5\}$, and $g(w)=\{-d_1,c_2,c_3,-c_4\}$. 
		
		\medskip
		(4) If $w_1w_2, wv$ are negative and $uw_2$ is positive, then let $g(u)=\{c_1,c_4,d_5\}$, $g(v)=\{d_1,-c_4,d_5\}$, $g(w_1)=\{-c_2,-c_3,-d_5\}$, $g(w_2)=\{c_2,c_3,c_4\}$, and $g(w)=\{-d_1,c_2,c_3,c_4\}$.

		\bigskip
		We finally consider $\widehat{G'_3}$ and define an appropriate mapping $g(x)\in {[5] \choose \pm (6-d_G(x))}$ for $x\in X_3$ such that $g(u)\subset A_{f'}(u)$ and $g(v)\subset A_{f'}(v)$. By possible switching, we may assume that $uw_1, ww_1, vw_1$, and $uw_2$ are negative.
		Based on the signatures of the two $4$-cycles, by possible switching we have three cases. 
		
		\medskip
		(1) If $ww_2$ and $vw_2$ are negative, then we define $g(u)=\{c_1,c_4,d_5\}$, $g(v)=\{d_1,c_4,d_5\}$, $g(w_1)=g(w_2)=\{c_2,c_3,-d_5\}$, and $g(w)=\{-c_2,-c_3,c_4,d_5\}$.

		\medskip
		(2) If $ww_2$ is positive and $vw_2$ is negative, then let $g(u)=\{c_1,c_4,d_5\}$, $g(v)=\{d_1,c_4,d_5\}$, $g(w_1)=\{c_2,c_3,-d_5\}$, $g(w_2)=\{-c_2,-c_3,-d_5\}$, and $g(w)=\{c_1,-c_2,-c_3,c_4\}$.  
		
		\medskip
		(3) If $ww_2$ is negative and $vw_2$ is positive, then let $g(u)=\{c_1,c_4,d_5\}$, $g(v)=\{d_1,-c_4,d_5\}$, $g(w_1)=\{c_2,c_3,-d_5\}$, $g(w_2)=\{c_2,c_3,-c_4\}$ and $g(w)=\{-c_2,-c_3,c_4,d_5\}$.

		\bigskip
		For each case, together with $g(u')=f'(u')$, $g(v')=f'(v')$, and $g(x)=f(x)$ for $x\in V(\widehat{G})\setminus \{u,v,u',v',w_1,w_2\}$, each mapping $g$ is a $(5, \phi)$-coloring of $\widehat{G}$ with $\phi(x)=6-d_{G}(x)$, a contradiction. 
	\end{proof}

	\begin{claim}\label{claim:2-3}
		$\widehat{G}$ contains no $3$-vertex with one $2$-neighbor. Consequently, $\widehat{G}$ is cubic.
	\end{claim}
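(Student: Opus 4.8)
The plan is to keep the minimal-counterexample framework and to eliminate the offending $2$-vertex by \emph{suppressing} it. Suppose $w$ is a $3$-vertex with a $2$-neighbour $u$; let $u'$ be the second neighbour of $u$ and let $x,y$ be the other two neighbours of $w$. By \Cref{claim:2-2} the vertex $u'$ has degree $3$, by \Cref{claim:2-3-2} both $x,y$ have degree $3$, and since \Cref{claim:3OR4cycles} forbids a $(2,3,3)$-triangle we have $u'w\notin E(G)$; in particular $u'\notin\{x,y\}$. Let $\widehat{G}''$ be obtained from $\widehat{G}$ by deleting $u$ and inserting the edge $u'w$ with sign equal to the product of the signs of $u'u$ and $uw$. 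Because $u'w$ was not already present, $\widehat{G}''$ is a simple signed graph; it is subcubic, has one fewer vertex than $\widehat{G}$, and every surviving vertex keeps its degree, so $6-d_{G''}(z)=6-d_G(z)$ for all $z\neq u$.

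First I would check that $\widehat{G}''$ still satisfies the hypotheses of \Cref{thm:main}. Suppressing a $2$-vertex of the $2$-connected graph $\widehat{G}$ (\Cref{claim:2-connected}), with at least four vertices remaining and no parallel edge created, yields again a $2$-connected graph, so $\widehat{G}''$ is its own unique block. Having at least four vertices and containing the $3$-vertex $w$, it can lie in $\mathcal{B}_0$ only if $\widehat{G}''\cong K_4$ or $\widehat{G}''\cong K_4^{\bullet}$. The first would force $\widehat{G}\cong K_4^{\bullet}$, contradicting the standing assumption that $\widehat{G}$ has no block in $\mathcal{B}_0$. The second would make $\widehat{G}$ a copy of $K_4$ with two of its edges subdivided; but subdividing two adjacent edges creates a triangle adjacent to a $(2,3,3,3)$-cycle, while subdividing two independent edges creates two $(2,3,3,3)$-cycles sharing a $2$-path, both of which are excluded by \Cref{claim:adjacentT}. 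Hence $\widehat{G}''$ has no block isomorphic to a member of $\mathcal{B}_0$, and by minimality it admits a $(5,\phi'')$-coloring $f$ with $\phi''(z)=6-d_{G''}(z)$.

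I would then keep $f$ unchanged on $V(G)\setminus\{u\}$. Every edge of $\widehat{G}$ other than $u'u$ and $uw$ already occurs in $\widehat{G}''$ and is therefore satisfied, so it remains only to choose $f(u)\in\binom{[5]}{\pm 4}$. This is where suppression pays off: the edge $u'w$ in $\widehat{G}''$ imposes a relation between the $3$-sets $f(u')$ and $f(w)$, and I select the signs of the two reinstated edges so that their product matches that of $u'w$ (say $u'u$ negative and $uw$ positive when $u'w$ is negative, and both negative when $u'w$ is positive). A short check then shows that the colours forbidden to $u$ --- namely $f(u')$ together with $-f(w)$ or $f(w)$ as appropriate --- contain no pair $\{i,-i\}$, precisely because the $u'w$-constraint makes the two relevant $3$-sets ``non-opposite''. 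Consequently each of the five pairs $\{i,-i\}$ still offers an available colour, and any four of them give an admissible $f(u)$; this extends $f$ to a $(5,\phi)$-coloring of $\widehat{G}$, contradicting the choice of $\widehat{G}$. I expect the bad-block verification of the previous paragraph --- in particular recognising that the only dangerous image $K_4^{\bullet}$ is exactly ruled out by \Cref{claim:adjacentT} --- to be the most delicate point, since it is the one place where minimality cannot be applied blindly.

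For the concluding sentence, combining \Cref{claim:2-3-2} with the statement just proved shows that no $3$-vertex of $\widehat{G}$ has a $2$-neighbour. Since \Cref{claim:2-2} forces every $2$-vertex to have only $3$-neighbours, any $2$-vertex would produce a $3$-vertex with a $2$-neighbour, which is impossible. Thus $\widehat{G}$ has no $2$-vertex, and as $\delta(\widehat{G})\ge 2$ with $\widehat{G}$ subcubic, every vertex has degree $3$, i.e.\ $\widehat{G}$ is cubic.
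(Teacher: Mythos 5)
Your proof is correct, and it takes a genuinely different route from the paper's. The paper deletes the $3$-vertex (your $w$), applies minimality to $\widehat{G}-w$, and then has to re-colour: it shrinks the colour sets of the three neighbours and searches for a suitable set for $w$, which forces a case analysis over how $f(v_1)$, $f(v_2)$, $f(v_3)$ interact (Cases (1), (2.1), (2.2) in the paper). You instead suppress the $2$-vertex $u$, and the whole point of giving $u'w$ the product sign is that the constraint $f$ must satisfy across $u'w$ in $\widehat{G}''$ is precisely the statement that the forbidden set for $u$ --- $f(u')\cup(-f(w))$ or $f(u')\cup f(w)$ according to the normalized signs --- contains no full pair $\{i,-i\}$; the extension to $u$ is then automatic and uniform in the signature, with no casework (your ``selection'' of the two edge signs is really a switching at $u$, which preserves their product, but that is clearly what you mean and is legitimate). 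The trade-off is on the structural side, and you identify it correctly: vertex deletion can never create a block isomorphic to $K_4$ or $K_4^{\bullet}$, so the paper's hypothesis check is a one-liner, whereas suppression can shrink the graph onto these, and you must (and do) rule this out --- $\widehat{G}''\cong K_4$ would make $\widehat{G}$ itself a copy of $K_4^{\bullet}$, against the hypothesis of \Cref{thm:main}, while $\widehat{G}''\cong K_4^{\bullet}$ would make $\widehat{G}$ a $K_4$ with two subdivided edges, containing either a triangle adjacent to a $(2,3,3,3)$-cycle or two $(2,3,3,3)$-cycles sharing a $2$-path, both banned by \Cref{claim:adjacentT}. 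Your appeal to $2$-connectedness (preserved when suppressing a $2$-vertex whose neighbours are non-adjacent) to exclude $C^*_3$ and $C^*_4$ blocks is also sound. In short, the paper buys a trivial block verification at the price of colouring casework; you buy a clean colouring step at the price of a more delicate, but correct, block verification. Your derivation of the consequence (cubicity) agrees with the paper's.
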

	
	\begin{proof}
		Let $v$ be a $3$-vertex of $\widehat{G}$ and $N_G(v)=\{v_1,v_2,v_3\}$. By \Cref{claim:2-3-2}, at most one of $v_i$ is a $2$-vertex and assume that $d_G(v_1)=2$. We may apply switching such that, for $i\in[3]$, each $vv_i$ is negative in $\widehat{G}$. Let $\widehat{G'}:=\widehat{G}-\{v\}$.
		By \Cref{claim:adjacentT}, no block isomorphic to $C^*_3$ or $C^*_4$ is created by deleting a $3$-vertex, and thus $\widehat{G'}$ contains no block isomorphic to any member of $\mathcal{B}_0$.
		
		By minimality, $\widehat{G'}$ admits a $(5, \phi')$-coloring $f$ of $\widehat{G'}$ where $\phi'(x)=6-d_{G'}(x)$ for $x \in V(G')$. Since $d_{G'}(v_i)=1$ and $d_{G'}(v_i)=2$ for $i\in \{2,3\}$, we have $f(v_1)\in {[5] \choose \pm 5}$ and $f(v_i)\in {[5] \choose \pm 4}$ for $i\in \{2,3\}$. Without loss of generality, assume that $\{a_1,a_2,a_3,a_4,a_5\}\subset f(v_1)$ and $f(v_2)=\{b_1,b_2,b_3,b_4\}$. By symmetry, we may assume that $f(v_3)=\{c_1,c_2,c_3,c_k\}$ for some $k\in \{4,5\}$. Note that $|a_i|=|b_i|=|c_i|=i$ for each $i\in [5]$. We consider two cases:
		
		\medskip
		Case (1) If one of $b_i=c_i$ for $i\in [3]$, say $b_1=c_1$, then we define $g(v_1)=\{a_2,a_3,a_4,a_5\}$, $g(v_2)=\{b_1,b_2,b_3\}$, $g(v_3)=\{b_1, c_2,c_3\}$, and $g(v)=\{-b_1, -a_4,-a_5\}$.

		\medskip
		Case (2) Otherwise, for each $i\in [3]$, $b_i\neq c_i$. So $a_1$ is the same as exactly one of $b_1$ or $c_1$. 
		
		(2.1) If $a_1=b_1$, then we define $g(v_1)=\{a_1,a_2,a_3,a_{9-k}\}$, $g(v_2)=\{a_1,b_2,b_3\}$, $g(v_3)=\{c_2,c_3,c_k\}$, and $g(v)=\{-a_1,-a_{9-k},-c_k\}$. 
		
		(2.2) If $a_1=c_1$, then we define $g(v_1)=\{a_1,a_2,a_3,a_5\}$, $g(v_2)=\{b_2,b_3,b_4\}$, $g(v_3)=\{a_1,c_2,c_3\}$, and $g(v)=\{-a_1,-b_4,-a_5\}$. 
		
		\medskip
		For each case, let $g(x)=f(x)$ for $x\in V(\widehat{G})\setminus \{v,v_1,v_2,v_3\}$. Each mapping $g$ is a $(5, \phi)$-coloring of $\widehat{G}$ with $\phi(x)=6-d_{G}(x)$, a contradiction.  
	\end{proof}
	
	We complete the proof with our last claim. It contradicts the fact that $\widehat{G}$ must contain a $3$-vertex. Recall that $\widehat{G}$ has at least $5$ vertices.

	\begin{claim}\label{claim:contradiction}
		$\widehat{G}$ contains no $3$-vertices.
	\end{claim}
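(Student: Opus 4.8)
The plan is to exploit that, by \Cref{claim:2-3}, the minimal counterexample $\widehat{G}$ is cubic, so \emph{every} vertex is a $3$-vertex and it suffices to exhibit a single reducible $3$-vertex. Being cubic and connected, $\widehat{G}$ has an even number of vertices, hence at least $6$ (using \Cref{lem:4vertices}); by \Cref{claim:2-connected} it is $2$-connected with no $K_4$; and by \Cref{claim:adjacentT} it has no two triangles sharing an edge. I would pick an arbitrary vertex $v$ with neighbors $v_1,v_2,v_3$ (distinct, since $G$ is simple) and set $\widehat{G}':=\widehat{G}-v$. The key structural observation is that, as $\widehat{G}$ is cubic, the \emph{only} $2$-vertices of $\widehat{G}'$ are $v_1,v_2,v_3$, and each of them is adjacent to $v$.

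The first step is to verify that $\widehat{G}'$ has no block in $\mathcal{B}_0$, so that minimality applies. Any created $C^*_3$ or $C^*_4$ would have its $2$-vertices among $v_1,v_2,v_3$; since each such vertex is adjacent to $v$, the corresponding triangle or $4$-cycle together with $v$ would produce two triangles sharing an edge, contradicting \Cref{claim:adjacentT}. A $K_4$ or $K_4^{\bullet}$ block would be degree-saturated on its $3$-vertices, forcing that block either to be a whole component of the connected graph $\widehat{G}'$ or to persist as a block of $\widehat{G}$ itself; the first option contradicts the order and parity of $\widehat{G}$, the second contradicts the hypothesis that $\widehat{G}$ has no block in $\mathcal{B}_0$. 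Hence by minimality $\widehat{G}'$ admits a $(5,\phi')$-coloring $f$ with $\phi'(x)=6-d_{G'}(x)$, and in particular $f(v_i)\in\binom{[5]}{\pm 4}$ for $i\in[3]$.

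The extension to $v$ is where the genuine work lies. Replacing $f(v_i)$ by $-f(v_i)$ whenever $vv_i$ is positive, I may assume each constraint reads $f(v)\cap f(v_i)=\emptyset$. Since $v_i$ needs only $\phi(v_i)=3$ colors in $\widehat{G}$, \Cref{lem:InclusionColoring} lets me delete one color from each $f(v_i)$; thus it suffices to find $f(v)\in\binom{[5]}{\pm 3}$ with $|f(v)\cap f(v_i)|\le 1$ for every $i$, and then drop the (at most one) shared color from each neighbor. I would prove such an $f(v)$ always exists by counting: for a value $k\in[5]$, the sign $\varepsilon$ with $\varepsilon k$ lying in the fewest of the $f(v_i)$ meets at most $\lfloor 3/2\rfloor=1$ of them. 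If some value has a sign meeting none of the $f(v_i)$, the selection is routine; otherwise every value lies in two or three of the sets, and since $\sum_i|f(v_i)|=12$ a short count forces exactly three values to lie in precisely two of the sets, with each such value omitted by a distinct one of $f(v_1),f(v_2),f(v_3)$, so the three missing values $m_1,m_2,m_3$ are distinct. Choosing $f(v)=\{m_1,m_2,m_3\}$ with signs given by a system of distinct representatives for the assignment $m_i\mapsto\{1,2,3\}\setminus\{i\}$ (which exists, as the relevant bipartite graph is $K_{3,3}$ minus a perfect matching) meets each $f(v_i)$ exactly once, completing the extension.

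This produces a $(5,\phi)$-coloring of $\widehat{G}$ with $\phi(x)=6-d_G(x)$, contradicting that $\widehat{G}$ is a counterexample; hence $\widehat{G}$ has no $3$-vertex, which, together with $\widehat{G}$ being cubic, is the final contradiction. The main obstacle is exactly the combinatorial extension of the last paragraph: with only a single deletable color per neighbor one must show the color set of $v$ can always be realized, and the delicate tight case is precisely the one in which the three ``doubly covered'' values coincide with the three distinct omitted values $m_1,m_2,m_3$, handled by the distinct-representatives selection. The secondary point requiring care is ruling out a $K_4^{\bullet}$ block in $\widehat{G}'$, which forces the small-order case check described above.
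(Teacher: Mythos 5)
Your proposal is correct and follows the paper's reduction skeleton exactly — delete the $3$-vertex $v$, check no block of $\mathcal{B}_0$ is created, invoke minimality to color $\widehat{G}-v$, then extend to $v$ by shrinking each $f(v_i)\in\binom{[5]}{\pm 4}$ to a $3$-set — but the heart of the extension is argued quite differently. The paper performs an explicit case analysis on the shapes of $f(v_1),f(v_2),f(v_3)$ (whether a value-$5$ color occurs, whether some color is shared by two of the sets, etc.) and writes down concrete color sets $g(v_i)$ and $g(v)$ in each case. You instead prove the clean general statement that any three sets in $\binom{[5]}{\pm 4}$ admit some $A\in\binom{[5]}{\pm 3}$ meeting each of them in at most one element, via the incidence count $\sum_i|f(v_i)|=12$ and a system of distinct representatives in $K_{3,3}$ minus a perfect matching; this is more abstract and arguably more illuminating, since it isolates exactly what makes the extension work, whereas the paper's verification is label-by-label. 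Two soft spots in your write-up deserve attention, though neither is fatal. First, the case you call ``routine'' (some signed value free of all three sets) does need a short argument: including the free value, you must find two further values with signs whose hit-sets are disjoint, and this holds because if all four sign-pairs of two values crossed, one value would need two disjoint hit-sets of size at least $2$ inside $\{1,2,3\}$ — a contradiction; state this, since the whole claim rests on it. Second, your reason for excluding a $K_4^{\bullet}$ component of $\widehat{G}'$ (``order and parity'') is wrong in that subcase — $|V(G)|=6$ is even — but the conclusion is immediate anyway: $v$ would need three neighbors of degree $2$ in $\widehat{G}'$ while $K_4^{\bullet}$ has only one such vertex, and in fact any $K_4^{\bullet}$ block of $\widehat{G}'$ persists as a block of $\widehat{G}$, contradicting the hypothesis directly (a point the paper itself passes over silently).
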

	
	\begin{proof}
		Let $v$ be a $3$-vertex of $\widehat{G}$ and $N_G(v)=\{v_1,v_2,v_3\}$. By \Cref{claim:2-3}, $d_G(v_i)=3$ for each $i\in [3]$. We may apply switching such that each $vv_i$ for $i\in[3]$ is negative in $\widehat{G}$. Let $\widehat{G'}:=\widehat{G}-\{v\}$.
		By~\Cref{claim:3OR4cycles} and \Cref{claim:adjacentT}, $\widehat{G'}$ contains no block isomorphic to $C^*_3$ or $C^*_4$. 
		
		By minimality, $\widehat{G'}$ admits a $(5, \phi')$-coloring $f$ of $\widehat{G'}$ where $\phi'(x)=6-d_{G'}(x)$ for $x \in V(G')$. Since $d_{G'}(v_i)=2$ for $i\in [3]$, we have $f(v_i)\in {[5] \choose \pm 4}$ for $i\in [3]$. Without loss of generality, we may assume that $f(v_1)=\{a_1,a_2,a_3,a_4\}$ with $|a_i|=i$ for each $i\in [4]$. By symmetry, we have two possibilities for $f(v_2)$: (1) $f(v_2)=\{b_1,b_2,b_3,b_4\}$; (2) $f(v_2)=\{b_\alpha,b_\beta,b_\gamma,b_5\}$ where $|b_i|=i$. Moreover, assume that $f(v_3)=\{c_h, c_j, c_k, c_\ell\}$ with $|c_i|=i$ for $i\in \{h,j,k,\ell\}$. We discuss two cases:
		
		\medskip
		{\bf Case (1).} $f(v_2)=\{b_1,b_2,b_3,b_4\}$.
		
		\medskip
		{\bf Subcase (1.1)} \emph{One of $\{c_h, c_j, c_k, c_\ell\}$ has absolute value $5$, say $|c_\ell|=5$.} 
		
		In this case, $\{|c_h|, |c_j|, |c_k|\}\subset [4]$. Without loss of generality, we may assume that $\{c_h, c_j, c_k, c_\ell\}=\{c_1,c_2,c_3,c_5\}$. We define a new mapping $g$ as follows: $$g(v_1)=\{a_1,a_2,a_3\},~g(v_2)=\{b_1,b_2,b_4\},~g(v_3)=\{c_1,c_2,c_5\},~g(v)=\{-a_3,-b_4,-c_5\}$$ and $g(x)=f(x)$ for each $x\in V(\widehat{G})\setminus \{v,v_1,v_2,v_3\}$. Noting that $g(v_i)\subset f(v_i)$ for $i\in [3]$, this mapping $g$ is a $(5, \phi)$-coloring of $\widehat{G}$ where $\phi(x)=6-d_{G}(x)$ for $x \in V(G)$.
		
		In the following subcases, we know that $f(v_3)\cap \{5,-5\}=\emptyset$ and may thus assume that $\{c_h, c_j, c_k, c_\ell\}=\{c_1,c_2,c_3,c_4\}$.
		
		\medskip
		{\bf Subcase (1.2)} \emph{One of $\{c_1,c_2,c_3,c_4\}$, say $c_1$, belongs to $f(v_1)\cap f(v_2)$.} 
		
		That is to say, $a_1=b_1=c_1$. We define a mapping $g$ as follows: $$g(v_1)=\{a_1,a_2,a_3\},~g(v_2)=\{b_1,b_2,b_3\},~g(v_3)=\{c_1,c_2,c_3\},~g(v)=\{-a_1,4,5\}$$ and $g(x)=f(x)$ for each $x\in V(\widehat{G})\setminus \{v,v_1,v_2,v_3\}$. One may readily check that $g$ is a $(5, \phi)$-coloring of $\widehat{G}$ where $\phi(x)=6-d_{G}(x)$ for $x \in V(G)$.
		
		\medskip
		{\bf Subcase (1.3)} \emph{Otherwise, each $c_i$ for $i\in \{1,2,3,4\}$ is in either $\{i, -i\}\setminus \{a_i, b_i\}$ or $(f(v_1)\setminus f(v_2))\cup (f(v_2)\setminus f(v_1))$.} 
		
		Since each set $f(v_j)$ for $j\in [3]$ is in the same form, without loss of generality, we may assume that $a_1=b_1=-c_1$. We define a mapping $g$ as follows: $$g(v_1)=\{a_1,a_2,a_3\},~g(v_2)=\{b_1,b_2,b_3\},~g(v_3)=\{c_2,c_3,c_4\},~g(v)=\{c_1, -c_4, 5\}$$ and $g(x)=f(x)$ for each $x\in V(\widehat{G})\setminus \{v,v_1,v_2,v_3\}$. Such a mapping $g$ is a $(5, \phi)$-coloring of $\widehat{G}$ where $\phi(x)=6-d_{G}(x)$ for $x \in V(G)$.

		\medskip
		{\bf Case (2).} $f(v_2)=\{b_\alpha,b_\beta,b_\gamma,b_5\}$ for $\alpha,\beta,\gamma\in [4]$.
		
		In this case, we first claim that $\{h,j,k,\ell\}\neq \{1,2,3,4\}$ and $\{h,j,k,\ell\}\neq \{\alpha,\beta,\gamma,5\}$. As otherwise, by the symmetries of the vertices $v_1,v_2,$ and $v_3$, we may apply the same arguments in Subcase (1.1) and complete the proof. 
		Recalling that $\{\alpha,\beta,\gamma\}$ is a $3$-subset of $\{1,2,3,4\}$, we have that $\{c_h, c_j, c_k, c_\ell\}=\{c_{\alpha'}, c_{\beta'}, c_{\gamma'}, c_5\}$ where $\alpha', \beta', \gamma'\in [4]$ and $|\{\alpha,\beta,\gamma\}\cap \{\alpha',\beta',\gamma'\}|=2$. Without loss of generality, we assume that $\alpha=\alpha', \beta=\beta'$ and $\gamma\neq \gamma'$.
		We define a mapping $g$ as follows: $$g(v_1)=\{a_\alpha,a_\beta,a_\gamma\},~g(v_2)=\{b_\alpha,b_\beta,b_5\},~g(v_3)=\{c_{\alpha'},c_{\beta'},c_{\gamma'}\},~g(v)=\{-a_\gamma, -b_5, -c_{\gamma'}\}$$ and $g(x)=f(x)$ for each $x\in V(\widehat{G})\setminus \{v,v_1,v_2,v_3\}$. Since $\gamma\neq \gamma'$ and $\gamma, \gamma'\in [4]$, $g$ can be checked to be a $(5, \phi)$-coloring of $\widehat{G}$ where $\phi(x)=6-d_{G}(x)$ for $x \in V(G)$.
	\end{proof}

	\section{Remarks and questions}\label{sec:Que}
	Besides the example $\widehat{K}_4^{\bullet}$, it is unclear whether there exists an infinite family of subcubic signed graphs critically having fractional balanced chromatic number $\frac{5}{3}$. 
	
	We conjecture that if we further forbid $\widehat{K}_4^{\bullet}$ from our graph class of \Cref{thm:fractional-main}, the fractional balanced chromatic number is bounded by $\frac{8}{5}$.
	\begin{conjecture}\label{conj:8/5}
		Every signed subcubic graph not isomorphic to $(K_4,-)$ and not containing $\widehat{K}_4^{\bullet}$ admits an $(8,5)$-coloring.
	\end{conjecture}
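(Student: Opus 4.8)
The plan is to mirror the inductive architecture behind \Cref{thm:main}, now with the larger palette $p=8$ and the degree-dependent demand $\phi(v)=8-d_G(v)$ (so $3$-vertices request $5$ colors, $2$-vertices $6$, and $1$-vertices $7$), and to prove the stronger statement that every signed subcubic graph whose blocks avoid a suitable finite family $\mathcal{B}$ admits an $(8,\phi)$-coloring. Since $8-d_G(v)\ge 5$ for every vertex of a subcubic graph, \Cref{lem:InclusionColoring} then downgrades such a coloring to the desired $(8,5)$-coloring, exactly as \Cref{thm:fractional-main} was derived from \Cref{thm:main}. The family $\mathcal{B}$ should again contain the small mixed-degree configurations $C_3^{*},C_4^{*}$ together with whatever signed obstructions the reductions force; the two genuine exceptions $(K_4,-)$ and $\widehat{K}_4^{\bullet}$ enter because $\chi_{\text{\it fb}}(K_4,-)=2$ and $\chi_{\text{\it fb}}(\widehat{K}_4^{\bullet})=\tfrac{5}{3}$, both strictly above $\tfrac{8}{5}$, by the $\frac{|V(G)|}{\beta(G,\sigma)}$ bound of \cite{KNWYZZ25+} used in \Cref{lem:5/3}.

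First I would establish the base cases, an analog of \Cref{lem:smalCase} and \Cref{lem:4vertices}. Using homomorphisms to short signed cycles (\Cref{obs:subgraph} and \Cref{prop:C-k}) one verifies that $C_3$, $C_4$, and $K_4^{\bullet}$ admit $(8,\phi)$-colorings for appropriate small demand vectors (e.g.\ assigning $5,5,6$ on a triangle and $5,5,6,6$ on a $C_4$, paralleling the $3,3,4$ pattern of \Cref{lem:smalCase}). The decisive point here is that $K_4^{\bullet}$ is $(8,5)$-colorable for \emph{every} signature except the one switching-equivalent to $\widehat{K}_4^{\bullet}$; this is precisely where forbidding $\widehat{K}_4^{\bullet}$ is indispensable, and it is the structural difference from the $p=5$ proof, in which all signatures of $K_4^{\bullet}$ were colorable.

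Next, taking a minimal counterexample $\widehat{G}$ (to the stronger statement), I would rerun the reducible-configuration analysis of \Cref{sec:ProofMain}: show $\widehat{G}$ is $2$-connected (as in \Cref{claim:2-connected}), then successively exclude the short cycles of \Cref{claim:3OR4cycles}, consecutive and adjacent $2$-vertices (\Cref{claim:2-2-2}, \Cref{claim:2-2}), $3$-vertices with two $2$-neighbors (\Cref{claim:2-3-2}), adjacent triangles and their relatives (\Cref{claim:adjacentT}), and $3$-vertices with a single $2$-neighbor (\Cref{claim:2-3}), reducing to the cubic case and finally contradicting the existence of a $3$-vertex (as in \Cref{claim:contradiction}). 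Each reduction uses the same template—delete or contract a small piece, color the remainder by minimality, shrink neighbors' color sets via \Cref{obs:proper_subset} to free room, and extend—and the arithmetic is more forgiving than for $p=5$ because each degree-$2$ neighbor now carries $6$ colors of distinct absolute value out of $8$, giving genuine slack. Throughout, one must check that deletions and contractions neither leave nor create a block in $\mathcal{B}$ and, in particular, never produce a $\widehat{K}_4^{\bullet}$ subgraph.

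The hard part, and the reason this remains a conjecture, is controlling the explosion of the case analysis while simultaneously proving that $\widehat{K}_4^{\bullet}$ is the \emph{only} new obstruction. In the final cubic step, deleting a $3$-vertex $v$ with cubic neighbors $v_1,v_2,v_3$ leaves three $6$-subsets of $\pm[8]$ of distinct absolute values; one must shrink each to a $5$-subset and choose an antipodal-pair-free member of $\binom{[8]}{\pm 5}$ for $v$ disjoint from all three. A crude count is hostile, since three $5$-subsets can occupy $15$ of the $16$ signed colors, so the extension must exploit the sign freedom (selecting $-a$ when a neighbor blocks $+a$) together with the discarded color of each neighbor. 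I expect that, after reducing by symmetry to a bounded list of canonical absolute-value and sign patterns for the triple of color sets, the extension succeeds in every case; the delicate claim is that the $(8,5)$ extension can fail in the near-cubic reductions \emph{only} when a $\widehat{K}_4^{\bullet}$ is already present, so that forbidding it removes the sole exception. Verifying this correspondence rigorously, rather than merely checking the finitely many patterns by hand, is the main obstacle, and a computer search over the bounded family of color-set configurations would be a natural way to settle it.
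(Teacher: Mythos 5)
The statement you set out to prove is \Cref{conj:8/5}, which the paper leaves as an \emph{open conjecture}: there is no proof in the paper to compare against, and your proposal does not close that gap either. What you have written is a research plan, not a proof, and you acknowledge this yourself: the base cases (the $(8,\phi)$-analogue of \Cref{lem:smalCase} and \Cref{lem:4vertices}), every reducible-configuration lemma (the analogues of \Cref{claim:2-connected} through \Cref{claim:contradiction}), and above all the claim that $(K_4,-)$ and $\widehat{K}_4^{\bullet}$ are the \emph{only} obstructions are all deferred to ``a bounded list of canonical patterns'' or ``a computer search'' that is never carried out. The necessity of the two exclusions (via $\chi_{\text{\it fb}}(K_4,-)=2$ and $\chi_{\text{\it fb}}(\widehat{K}_4^{\bullet})=\frac{5}{3}$, both exceeding $\frac{8}{5}$, by the counting bound used in \Cref{lem:5/3}) is the easy direction and you argue it correctly; sufficiency is the entire content of the conjecture and remains untouched.

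Two concrete warnings about the plan itself. First, your assertion that ``the arithmetic is more forgiving than for $p=5$'' is misleading: the conjectured bound is attained by the all-negative-faces $3$-cube of \Cref{fig:NegCube}, a cubic graph with $8$ vertices and $\beta=5$, so in any $(8,5)$-coloring of it \emph{every} one of the $8$ color classes must be a maximum balanced set. There is zero slack there, and the cube is not excluded by your family $\mathcal{B}$; hence your final cubic step (the analogue of \Cref{claim:contradiction}) must succeed with exact rather than approximate counting, and any extension argument has to cope with the fact that near this configuration the partial colorings provided by minimality are essentially rigid. Second, your plan quietly assumes that $\mathcal{B}$ is a \emph{finite} family and that the reductions (in particular contractions) never create a copy of $\widehat{K}_4^{\bullet}$; but the paper itself (Section 5) points out that it is unknown whether there are infinitely many critical examples even at $\frac{5}{3}$, so finiteness of the obstruction set at $\frac{8}{5}$ is precisely one of the things that needs proof, not a working hypothesis. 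In short: the architecture you describe is the natural point of departure and mirrors how \Cref{thm:fractional-main} is derived from \Cref{thm:main}, but no part of the conjecture is actually proven here.
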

	
	The value of $\frac{8}{5}$ is reached by a signed $3$-dimensional hypercube depicted in \Cref{fig:NegCube}. One may ask the same question of finding infinite critical examples that achieve this bound.
	
	\begin{figure}[!htbp]
		\centering
		
		\centering
		\begin{tikzpicture}[scale=.45]		
			\draw [line width=0.4mm, dotted, blue] (-2,-2) to (2,-2);
			\draw [line width=0.4mm, dotted, blue] (-2,-2) to (-2,2);
			\draw [line width=0.4mm, dotted, blue] (2,-2) to (2,2);
			\draw [line width=0.4mm, red] (-2,2) to (2,2);
			
			\draw [line width=0.4mm, dotted, blue] (-4,-4) to (4,-4);
			\draw [line width=0.4mm, dotted, blue] (-4,-4) to (-4,4);
			\draw [line width=0.4mm, red] (4,-4) to (4,4);
			\draw [line width=0.4mm, dotted, blue] (-4,4) to (4,4);
			
			\draw [line width=0.4mm, red] (-2,-2) to (-4,-4);
			\draw [line width=0.4mm, dotted, blue] (2,2) to (4,4);
			\draw [line width=0.4mm, dotted, blue] (2,-2) to (4,-4);
			\draw [line width=0.4mm, dotted, blue] (-2,2) to (-4,4);
			
			\draw [fill=white,line width=0.5pt] (2,-2) node[above] {} circle (4pt); 
			\draw [fill=white,line width=0.5pt] (-2,2) node[above] {} circle (4pt); 
			\draw [fill=white,line width=0.5pt] (2,2) node[above] {} circle (4pt); 
			\draw [fill=white,line width=0.5pt] (-2,-2) node[above] {} circle (4pt); 
			\draw [fill=white,line width=0.5pt] (4,-4) node[above] {} circle (4pt); 
			\draw [fill=white,line width=0.5pt] (-4,4) node[above] {} circle (4pt); 
			\draw [fill=white,line width=0.5pt] (4,4) node[above] {} circle (4pt); 
			\draw [fill=white,line width=0.5pt] (-4,-4) node[above] {} circle (4pt); 
			
		\end{tikzpicture}
		\caption{A $3$-cube with all faces negative}
		\label{fig:NegCube}	
		
	\end{figure}
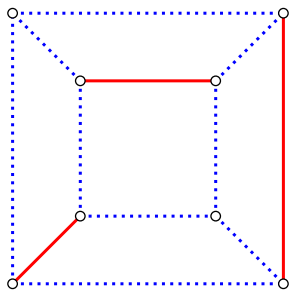
	
	\section*{Acknowledgment} This work has received support under the program ``Investissement d'Avenir" launched by the French Government and implemented by ANR, with the reference ``ANR‐18‐IdEx‐0001" as part of its program ``Emergence".  Xiaolan Hu is supported by NSFC (No. 12471325). Jiaao Li is supported by National Key Research and Development Program of China (No. 2022YFA1006400), National Natural Science Foundation of China (Nos. 12222108, 12131013), Natural Science Foundation of Tianjin (Nos. 24JCJQJC00130, 22JCYBJC01520), and the Fundamental Research Funds for the Central Universities, Nankai University. Lujia Wang is supported by Natural Science Foundation of China (No. 12371359). Zhouningxin Wang is partially supported by National Natural Science Foundation of China (No. 12301444) and the Fundamental Research Funds for the Central Universities, Nankai University. Xiaowei Yu is supported by the Basic Research Program Young Science and Technology Talent Project of Xuzhou (KC23026).

\end{document}